\theoremstyle{plain} %plain, break, marginbreak, change, changebreak, margin, remark
\newtheorem{theorem}{Theorem}
\newtheorem{lemma}{Lemma}
\newtheorem{corollary}{Corollary}
\newtheorem{conjecture}{Conjecture}
\newtheorem{proposition}{Proposition}
\newtheorem*{conjecture*}{Conjecture}
\newtheorem*{theorem*}{Theorem}
\theoremstyle{plain}
\theoremstyle{remark}
\theoremstyle{definition}
\newtheorem{definition}{Definition}
\newtheorem*{acknowledgment*}{Acknowledgments}
\numberwithin{equation}{section}
\DeclareMathOperator{\const}{const}
\newcommand{\RR}{\mathbb{R}}
\newcommand{\e}{\varepsilon}
\newcommand{\s}{\sigma}
\newcommand{\Lam}{\Lambda}
\newcommand{\us}{\underset}
\renewcommand{\a}{\alpha}
\renewcommand{\b}{\beta}
\renewcommand{\l}{\left}
\renewcommand{\r}{\right}
\renewcommand{\d}{\displaystyle}
\renewcommand{\Re}{\mathrm{Re}}
\renewcommand{\Im}{{\rm Im}}
\renewcommand{\epsilon}{\varepsilon}
\newcommand{\todaye}{\the\year/\the\month/\the\day}
\title[The distribution of zeros and $\log{\zeta(s)}$]
{On the behavior of the logarithm\\ of the Riemann zeta-function}
\author[S. INOUE]{Sh\={o}ta Inoue}
\address{Graduate School of Mathematics, Nagoya University,
Furocho, Chikusaku, Nagoya 464-8602, Japan}
\email{m16006w@math.nagoya-u.ac.jp}
\subjclass[2010]{Primary 11M06; Secondary 11M26}
\begin{document}

\maketitle

\begin{abstract}
The purpose of the present paper is to reveal the relation between the behavior of the logarithm of the Riemann zeta-function 
$\log{\zeta(s)}$ and the distribution of zeros of the Riemann zeta-function.
We already know some examples for the relation by some previous works.
For example, Littlewood showed an upper bound of $\log{\zeta(1/2 + it)}$ by assuming the Riemann Hypothesis in 1924.
One of our results reveals that Littlewood's upper bound 
can be proved without assuming a hypothesis as strong as the Riemann Hypothesis.
\end{abstract}

%%%%%%%%%%%%%%%%%%%%%%%%%%%%%%%%%%%%%%%%%%%%%%%%%%

%%%%%%%%%%%%%%%%%%%%%%%%%%%%%%%%%%%%%%%%%%%%%%%%%%%%%%%%%%%%%%%%%%%%%%%%%%%%%%%%%%%%%%%%%%%%%%%%%%%%%%%%%%%%%%%%%%%%%%%%%%%

\section{\textbf{Introduction}}

%%%%%%%%%%%%%%%%%%%%%%%%%%%%%%%%%%%%%%%%%%%%%%%%%%%%%%%%%%%%%%%%%%%%%%%%%%%%%%%%%%%%%%%%%%%%%%%%%%%%%%%%%%%%%%%%%%%%%%%%%%%

In the present paper, we discuss the behavior of the logarithm of the Riemann zeta-function $\log{\zeta(s)}$ 
under an assumption related to the distribution of zeros of the Riemann zeta-function.
As classical upper bounds of this function, we know well that
\begin{gather}
\Re\l( \log{\zeta\l( 1/2 + it \r)} \r) = \log{\l| \zeta\l( 1/2 + it \r) \r|} \leq C\log{t}, \label{UUBR} \\
S(t) := \frac{1}{\pi}\Im\l( \log{\zeta(1/2 + it)} \r) \ll \log{t}.	\label{UUBI}
\end{gather}
On the other hand, Littlewood \cite{LES} showed the estimates
\begin{align}	\label{LUB}
\log{\l|\zeta\l( 1/2 + it \r)\r|}
\leq \frac{C\log{t}}{\log{\log{t}}}, \quad
S(t) \ll \frac{\log{t}}{\log{\log{t}}}
\end{align}
with $C$ a positive constant under the Riemann Hypothesis.
About one hundred years have passed since the above estimates were shown, 
but still it is difficult to improve these estimates even today.
Of course, in view of Littlewood's upper bounds \eqref{LUB}, we believe that 
classical estimates \eqref{UUBR}, \eqref{UUBI} are not best possible.
Moreover, by the following conjecture, we expect that it is also possible to improve Littlewood's upper bounds \eqref{LUB}.
\begin{conjecture}[Farmer, Gonek, and Hughes \cite{FGHML}]
\begin{gather*}
\max_{t \in [0, T]}\log{\l| \zeta(1/2 + it) \r|} = \l( \frac{1}{\sqrt{2}} + o(1) \r)\sqrt{(\log{T})(\log{\log{T}})},\\
\limsup_{t \rightarrow +\infty}\frac{S(t)}{\sqrt{(\log{t})(\log{\log{t}})}} = \frac{1}{\pi\sqrt{2}}.
\end{gather*}
\end{conjecture}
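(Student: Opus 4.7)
The plan is to attack the upper and lower bounds separately, exploiting the now-standard model of $\log\zeta(1/2+it)$ on the critical line as a log-correlated, approximately Gaussian random field. For the upper bound on $\max_{t\in[0,T]}\log|\zeta(1/2+it)|$, I would first approximate $\log|\zeta(1/2+it)|$ by a truncated prime sum $\Re\sum_{p\le X}p^{-1/2-it}$ with a suitable $X=X(T)$, the error being controlled via an explicit-formula/zero-density input. Since $\{p^{-it}\}_{p\le X}$ behaves as a family of essentially independent Steinhaus variables for $t\in[T,2T]$, a Gaussian moment calculation gives $\int_T^{2T}\exp\bigl(\lambda\Re\sum_{p\le X}p^{-1/2-it}\bigr)\,dt\approx T\exp(\lambda^2 V/2)$ with $V\sim\tfrac{1}{2}\log\log T$. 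Optimizing $\lambda$ and taking a union bound over $O(T\log T)$ sample points then yields the upper bound $(1/\sqrt{2}+o(1))\sqrt{(\log T)(\log\log T)}$. The same scheme with $\Im$ in place of $\Re$ handles $S(t)$, with the factor $1/\pi$ surviving unchanged.

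For the lower bound, which is the harder half, I would follow a resonator construction in the spirit of Soundararajan and of Bondarenko--Seip. One chooses a long Dirichlet polynomial $R(t)=\sum_{n\le N}r(n)n^{it}$ whose $L^{2}$ mass is concentrated on those $t$ where $\log|\zeta(1/2+it)|$ is abnormally large, and evaluates $\int_T^{2T}|R(t)|^2\log|\zeta(1/2+it)|\,dt$ via an Euler product expansion. To extract the sharp constant $1/\sqrt{2}$ rather than merely the correct order, the single resonator would have to be replaced by a multi-scale, branching construction modelled on the maximum of a branching random walk, so as to saturate the Gaussian large-deviation rate uniformly across all scales between $(\log\log T)^{0}$ and $\log\log T$.

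The main obstacle is precisely this constant $1/\sqrt{2}$ in the lower bound. Current unconditional resonator methods yield only $\sqrt{(\log T)(\log\log T)}/c$ for some $c>1$, and even on RH the sharp constant remains open. Closing this gap appears to require a genuinely new input, of one of two forms: either (a) a rigorous justification that the Gaussian/branching-random-walk model is valid down to the relevant scale, or (b) a zero-distribution input strong enough to exclude the hypothetical atypical clusters of nontrivial zeros that could destroy the Gaussian heuristic. For $S(t)$ there is the additional difficulty that $S(t)$ has mean zero with jump discontinuities at every zero, so the explicit formula must be applied \emph{both} to linearize $S(t)$ in prime variables \emph{and} to control the linearization error; this feeds the argument directly back into the distribution-of-zeros assumptions that are the theme of the rest of the paper.
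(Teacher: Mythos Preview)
The statement you are attempting to prove is a \emph{conjecture}, quoted from Farmer, Gonek, and Hughes; the paper does not prove it and does not claim to. It is introduced only as motivation for expecting that Littlewood's bounds \eqref{LUB} are not best possible. There is therefore no proof in the paper to compare against, and the correct ``proof'' here is simply to note that the assertion is an open problem.

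Your proposal is not a proof but a programme, and you acknowledge as much: you write that the sharp constant $1/\sqrt{2}$ in the lower bound ``remains open'' even under RH, and that closing the gap ``appears to require a genuinely new input''. That is an accurate assessment of the state of the art. The upper-bound heuristic you sketch (Dirichlet-polynomial approximation, Gaussian moments, union bound) is the standard log-correlated picture, but making it rigorous at the level of the constant also requires RH-type control of the approximation error that is not currently available. So neither half of the conjecture is within reach by the methods you outline, and it would be misleading to present this as a proof attempt rather than as heuristic evidence for the conjecture.
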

Therefore, we would like to understand the behavior of $\log{\zeta(s)}$ more deeply to improve the above estimates.
From this perspective, as interesting works, there are some studies for the implicit constant 
in estimates \eqref{UUBR}, \eqref{UUBI}, and \eqref{LUB}.
For example, for sufficiently large $t$, Bourgain \cite{BER} showed the inequality 
$
\log{|\zeta(1/2 + it)|} \leq (13/84 + \e)\log{t}
$
, and 
Trudgian \cite{TSII} showed the inequality 
$
|S(t)| \leq (0.112 + \e)\log{t}
$.
Assuming the Riemann Hypothesis, Chandee and Soundararajan \cite{CSBR} showed the inequality 
$\log{|\zeta(1/2 + it)|} \leq (\frac{\log{2}}{2} + \e)\frac{\log{t}}{\log{\log{t}}}$, and 
Carneiro, Chandee, and Milinovich \cite{CCMB} showed 
$
|S(t)| \leq \l( \frac{1}{4} + \e \r)\frac{\log{t}}{\log{\log{t}}}.
$ 
Moreover, some interesting omega-results were also shown 
by Montgomery \cite{MEV}, Soundararajan \cite{SEV}, and Tsang \cite{KMTO}.

In the present paper, to understand the behavior of $\log{\zeta(s)}$ more deeply, 
we focus on the relation between this behavior and the distribution of zeros. 
Now, we already know some results of such a type.
For example, Backlund gave a statement equivalent to the Lindel\"of Hypothesis 
in terms of the distribution of zeros of $\zeta(s)$.
Let $N(\s, T, h)$ be the number of non-trivial zeros $\rho = \b + i\gamma$ of the Riemann zeta-function with 
$\b \geq \s$ and $T \leq \gamma \leq T + h$ counted with multiplicity. 
The Lindel\"of Hypothesis means that, for any fixed $\e > 0$, the estimate
\begin{align*}
\log{\l|\zeta\l( 1/2 + it \r)\r|} \leq \e\log{|t|}
\end{align*}
holds for $|t| \geq T(\e)$. 
Backlund \cite{BU} showed that the Lindel\"of Hypothesis is equivalent to the following statement:
\textit{for any fixed number $\e > 0$, 
\begin{align*}
N(\s, T, 1) = o(\log{T}) \quad (T \rightarrow +\infty)
\end{align*}
holds for $\s \geq \frac{1}{2} + \e$.}
Thanks to this equivalence, we can regard the Lindel\"of Hypothesis as a problem for the distribution of zeros of $\zeta(s)$ 
whose real parts are strictly greater than one-half.
In addition, Cram\'er \cite{CLS} showed the estimate
\begin{align}	\label{CUBS}
S(t) = o(\log{t}) \quad (T \rightarrow +\infty)
\end{align}
holds under the Lindel\"of Hypothesis.
Therefore, we may guess that there is a relation between the behavior of $\log{\zeta(s)}$ 
and the distribution of zeros of $\zeta(s)$.

The purpose of the present paper is to describe such a relation more clearly.
To achieve this purpose, the author introduces the following definition.

%%%%%%%%%%%%%%%%%%%%%%%%%%%%%%%%%%	Short Interval Zero Density Condition	%%%%%%%%%%%%%%%%%%%%%%%%%%%%%%%%%%%%%%%%%%%%%%%%%%%%

\begin{definition}[Short Interval Zero Density Condition]
Let $l(t), v(t)$ be nonnegative even functions weakly decreasing for sufficiently large $t$, 
and let $\Phi(t), \Psi(t)$ be even functions weakly increasing and greater than three for sufficiently large $t$.
Then, for an interval $I$ on $\RR$, consider the assertion ``the following estimate
\begin{align}
N(\s, T, l(T)) \leq l(T) v(T) (\log{T}) \Phi(T)^{1/2 - \s} 
\end{align}
holds for $T \in I$, $\s \geq \frac{1}{2} + \frac{1}{\Psi(T)}$". 
We call this assertion ``\textit{the Short Interval Zero Density Condition of length $l$, volume $v$, density $\Phi$, and domain $\Psi$ on $I$}".
In this article, we call the assumption ``SIZDC-$(l, v, \Phi, \Psi)$ on $I$" for brevity.
In addition, we may simply call the assertion ``SIZDC-$(l, v, \Phi, \Psi)$" in the case $I = \RR$.
\end{definition}

%%%%%%%%%%%%%%%%%%%%%%%%%%%%%%%%%%%%%%%%%%%%%%%%%%%%%%%%%%%%%%%%%%%%%%%%%%%%%%%%%%%%%%%%%%%%%%%%%%%%%%%%%%%%%%%%%%%%%%%%%%%%

Now the author mentions some remarks for this definition.

First, the assertion SIZDC-$(\mathbf{1}, v, \mathbf{1}, \Psi)$ becomes an unconditional estimate 
for any positive valued function $\Psi$
if $v$ is a constant function whose value is sufficiently large. 
Here the function $\mathbf{1}$ indicates the identically on function.

Secondly, we can express the Riemann Hypothesis in terms of the SIZDC.
Actually, the Riemann Hypothesis is equivalent to that, 
for any functions $l$, $\Phi$, and $\Psi$, 
the SIZDC-$(l, \mathbf{0}, \Phi, \Psi)$ holds, 
where the function $\mathbf{0}$ indicates the identically zero function.

Thirdly, we can also express the Lindel\"of Hypothesis in terms of the SIZDC.
By Backlund's work, the Lindel\"of Hypothesis is equivalent to that, 
for any bounded function $\Psi$, there exists a function $v(t) = o(1)$ such that
the SIZDC-$(\mathbf{1}, v, \const, \Psi)$ holds. 
Here, the function $\mathbf{1}$ means identically one function, 
and the function $\const$ means some constant function. 

Therefore, this definition can inclusively deal various situations,
and the author believes that studies of $\log{\zeta(s)}$ under various situations 
are important to describe relations between the behavior of $\log{\zeta(s)}$ and the distribution
of zeros of the Riemann zeta-function.
In fact, the author gives a sufficient condition for Littlewood's bounds \eqref{LUB} 
that is weaker than the Riemann Hypothesis.

\section{\textbf{Notations}} \label{notation}

We define some notations in this section. 
Let $s = \s + it$ be a complex number with $\s, t$ real numbers.
Let $t \geq 14$, $x \geq 3$, and $0 < a < 1$, and put $\delta_x = (\log{x})^{-1}$ and $s_1 = \s_1 + it$ with $\s_1 = \frac{1}{2} + a + \delta_x$.
Let $\Lam$ be the von Mangoldt function. 
The modified von Mangoldt function $\Lam_x$ is defined by
\begin{align*}
\Lam_{x}(n) = \l\{
\begin{array}{cl}
\d{\Lam(n)}						& \text{if \; $n \leq x$,} \vspace{1mm}\\
\d{\Lam(n)\frac{\log(x^2 / n)}{\log{x}}}	& \text{if \; $x \leq n \leq x^2$,} \vspace{1mm}\\
0							& \text{otherwise.}
\end{array}
\r.
\end{align*}
We also define the set $A = A(x, t)$ by 
\begin{align}	\label{def_A}
A = \l\{ \b + i\gamma \; \middle| \; \zeta(\b + i\gamma) = 0, 
|t - \gamma| \leq \min\l\{\frac{t}{2}, \frac{x^{3(\b - \frac{1}{2})}}{\sqrt{\log{x}}}\r\} \r\},
\end{align}
and positive numbers $\s_A = \s_A(x, t)$ and $L = L(x, t)$ by
\begin{gather}
\label{def_s_M}
\s_A = \max\l\{ \b \mid \b + i\gamma \in A \r\},\\ 
\label{def_L}
L = \min\l\{\frac{t}{2}, \max\l\{ \frac{x^{3(\b - \frac{1}{2})}}{\sqrt{\log{x}}}
\middle| \b + i\gamma \in A \r\}\r\}.
\end{gather} 
Moreover, we define
\begin{align}	\label{def_tau}
\tau(a) = \l\{
\begin{array}{ll}
1	&	\text{if \; $a \leq \s_A$,} \vspace{1mm}\\
0	&	\text{if \; $a > \s_A$,}
\end{array}
\r.
\end{align}
\begin{align}	\label{def_F}
F_{a}(x, t) := \l( \frac{x}{\Phi(t/2)} \r)^{a} \times \sum_{k = 0}^{[(\s_A - a)\log{\Phi(t/2)}]}\l( \frac{x^2}{\Phi(t/2)} \r)^{(k + 1) / \log{\Phi(t/2)}},
\end{align}
\begin{align}	\label{def_G}
G_{a}(x, t) : = \tau(a)(l(t / 2) + \delta_x) v(t / 2) (\log{x}) (\log{t}),
\end{align}
\begin{align}	\label{def_Y}
&Y_{a}(\s, x, t) := \\ \nonumber
&\frac{x^{1/2 + a - \s}}{\log{x}} \l( \l| \sum_{n \leq x^{2}}\frac{\Lam_{x}(n)}{n^{s_1}} \r| + \log{t}\r) 
+ G_{a}(x, t) \times \Bigg\{x^{1/2 - \s}\frac{F_{a}(x, t)}{\log{x}} + \\ \nonumber
&\qquad \quad + \frac{x^{1/2 - \s}}{\log{x}} \l( 1 + \frac{\Phi(t / 2)^{-\delta_x}\log{x}}{\log{\Phi(t / 2)}} \r) \l(\frac{x}{\Phi(t / 2)}\r)^a
 + \frac{\Phi(t / 2)^{1/2 - \s + \delta_x}}{\log{\Phi(t / 2)}} \Bigg\},
\end{align}
and
\begin{align} \label{def_E}
E_{a}(x, t) 
= &\l| \sum_{n \leq x^2}\frac{\Lam_{x}(n)}{n^{s_1}} \r| + \log{t}
+ G_{a}(x, t) \times \\ \nonumber
& \quad \times \l( x^{-a}F_{a}(x, t) + \Phi(t / 2)^{-a}\l(1 + \frac{\log{x}}{\log{\Phi(t / 2)}} \Phi(t / 2)^{-\delta_x}\r) \r).
\end{align}

\section{\textbf{Results}}	\label{RSS}

The following assertion is the main theorem in the present paper.

\begin{theorem}	\label{main_thm}
Let $t \geq 14$ be not the ordinate of zeros of the Riemann zeta-function and $x$ be a number with $3 \leq x \leq \min\l\{e^{\Psi(t / 2)}, t^2 \r\}$, 
and put $\delta_x = (\log{x})^{-1}$ and $s_x = \s_x + it$ with $\s_x = \frac{1}{2} + 2\delta_x$.
Assume the SIZDC-$(l, v, \Phi, \Psi)$ on $[t - L, t + L]$ with $L$ defined by \eqref{def_L}. 
If $\s_x \leq \s \leq 2$, then we have
\begin{align*}
\log{\zeta(s)} 
= \sum_{|s - \rho| \leq \delta_x}\log\l|\frac{s - \rho}{\delta_x + i(t - \gamma)} \r|
+ \sum_{2 \leq n \leq x^2}\frac{\Lam_{x}(n)}{n^{s} \log{n}} + O\l( Y_{\delta_x}(\s, x, t) \r),
\end{align*}
where $Y_{a}(\s, x, t)$ is defined by \eqref{def_Y}.
If $\frac{1}{2} \leq \s \leq \s_x$, then we have
\begin{align}	\label{maineq_2}
\log{\zeta(s)} 
= &\sum_{|t - \gamma| \leq \delta_x}\log\l| \frac{s - \rho}{s_x - \rho} \r|
+\sum_{|s_x - \rho| \leq \delta_x}\log\l| \frac{s_x - \rho}{\delta_x + i(t - \gamma)} \r|\\ \nonumber
&+ \sum_{2 \leq n \leq x^2}\frac{\Lam_{x}(n)}{n^{s_x} \log{n}}
+ O\l(\delta_x E_{\delta_x}(x, t) \r),
\end{align}
where $E_{a}(x, t)$ is defined by \eqref{def_E}.
\end{theorem}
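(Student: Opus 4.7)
The plan is to begin with a Selberg-type explicit formula for $-\zeta'/\zeta(s)$ obtained from a Mellin--Perron integral with the kernel $(x^{2w} - x^{w})/(w\log x)$ centred at $s$. Shifting the contour to collect the pole at $w = 0$, the nontrivial zeros $\rho$ in a suitable rectangle, and the trivial contributions produces an identity of the shape
\begin{align*}
-\frac{\zeta'}{\zeta}(s) = \sum_{n \leq x^2}\frac{\Lam_x(n)}{n^{s}} - \sum_{\rho}\frac{x^{2(\rho - s)} - x^{\rho - s}}{(\rho - s)^2 \log x} + O(\text{tails}),
\end{align*}
which is the standard starting point behind results of Selberg, Tsang, and Soundararajan. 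Integrating this identity horizontally from $s = \s + it$ to $\Re s = 2$, where $\log \zeta$ is given by its absolutely convergent Dirichlet series and the endpoint is controlled by $|\sum \Lam_x(n)/n^{s_1}|$, converts the prime-power side into $\sum_{2 \leq n \leq x^2} \Lam_x(n)/(n^{s}\log n)$, while each zero contributes the antiderivative of its kernel.

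The heart of the argument is estimating the zero-side. For a zero $\rho$ with $|s - \rho| \leq \delta_x$ the antiderivative is, up to negligible error, the logarithmic factor $\log |(s - \rho)/(\delta_x + i(t - \g))|$ appearing in the statement. Every remaining zero is bounded shell-by-shell using the SIZDC. I would partition the remaining zeros into horizontal strips $\b \in [1/2 + k/\log\Phi(t/2),\,1/2 + (k+1)/\log\Phi(t/2)]$ for $0 \leq k \leq (\s_A - a)\log\Phi(t/2)$, and vertical dyadic shells $|t - \g| \in [2^{j-1} l(t/2),\,2^j l(t/2)]$ running up to $L$. Applying the SIZDC-$(l, v, \Phi, \Psi)$ on $[t - L, t + L]$ to each strip yields a count $\ll l(t/2)v(t/2)(\log t)\Phi(t/2)^{1/2 - \b}$; summing the integrated kernel over the shells produces a geometric series whose envelope is exactly $F_a(x, t)$, multiplied by the combinatorial factor $G_a(x, t)$. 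The indicator $\tau(a)$ is built in automatically because the range of $k$ is empty whenever $a > \s_A$. Zeros with $\b \leq 1/2 + 1/\Psi(t/2)$ fall outside the SIZDC regime; the usual Hadamard-product bound $\sum_{\rho} 1/|s - \rho|^2 \ll \log t$ disposes of them and generates the $\Phi(t/2)^{1/2 - \s + \delta_x}/\log\Phi(t/2)$ and $(x/\Phi(t/2))^a$ pieces of $Y_a$.

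For the second assertion, with $1/2 \leq \s \leq \s_x = 1/2 + 2\delta_x$, I would first apply the first assertion at the auxiliary point $s_x = \s_x + it$, obtaining the $\sum \Lam_x(n)/(n^{s_x}\log n)$ term and the $\sum_{|s_x - \rho| \leq \delta_x}\log|(s_x - \rho)/(\delta_x + i(t - \g))|$ term with error $O(Y_{\delta_x}(\s_x, x, t))$. I then bridge $\log\zeta(s)$ to $\log\zeta(s_x)$ by integrating $-\zeta'/\zeta$ horizontally from $s$ to $s_x$ and using the Hadamard factorisation locally: only zeros with $|t - \g| \leq \delta_x$ contribute a logarithmic spike $\log((s - \rho)/(s_x - \rho))$, while all other zeros contribute $O(\delta_x)$ times a bound of the same type already handled in the previous step. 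A direct comparison shows $Y_{\delta_x}(\s_x, x, t) \asymp \delta_x E_{\delta_x}(x, t)$, since $x^{1/2 - \s_x} = x^{-2\delta_x} = O(1)$ and $\delta_x = 1/\log x$ converts the prefactor $1/\log x$ into $\delta_x$; this produces the claimed error term in \eqref{maineq_2}.

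The main technical obstacle I anticipate is the bookkeeping in the third paragraph: organising the double sum over $(j, k)$ so that it assembles precisely into the product $F_a(x, t) \cdot G_a(x, t)$ without extraneous factors of $\log\log t$, correctly interfacing the SIZDC estimate for $\b$ near $1/2$ with the Hadamard-product bound for $\b \leq 1/2 + 1/\Psi(t/2)$, and verifying that the truncation level $L$ defined by \eqref{def_L} is exactly what is needed to make the contour shift and the zero enumeration mutually consistent. Everything else is a careful but essentially routine adaptation of Selberg's method to the SIZDC framework.
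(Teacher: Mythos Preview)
Your overall strategy---Selberg's explicit formula for $-\zeta'/\zeta$, integrated horizontally from $\sigma$ to $2$, with the zero contributions sliced into strips of width $1/\log\Phi(t/2)$ and controlled by the SIZDC---is exactly the paper's approach, and your treatment of the second assertion (apply the first at $s_x$, then bridge by integrating $\zeta'/\zeta$ over $[\sigma,\sigma_x]$) also matches. There is, however, a genuine gap in how you handle the zeros closest to the critical line.

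You propose that zeros with $|\beta-\tfrac12|<a$ (you phrase it as $\beta\le\tfrac12+1/\Psi(t/2)$, but with $a=\delta_x$ this is the same set up to symmetry) are ``disposed of'' by the Hadamard bound $\sum_\rho 1/|s-\rho|^2\ll\log t$. That bound is not strong enough here: for $\sigma$ near $\sigma_1=\tfrac12+a+\delta_x$ it yields only $\sum_{|\beta-1/2|<a}1/|s-\rho|^2\ll(\log t)/\delta_x^2$, which after integration produces an error $\asymp\log t$ rather than the required $x^{1/2+a-\sigma}(\log t)/\log x$. In particular it cannot generate the term $\frac{x^{1/2+a-\sigma}}{\log x}\bigl|\sum_{n\le x^2}\Lambda_x(n)/n^{s_1}\bigr|$ in $Y_a$; contrary to what you suggest, that term does \emph{not} come from the endpoint $\Re s=2$ of the integration.

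What the paper actually does (its Proposition~1) is a self-referencing bootstrap. Using the positivity $\Re\bigl(1/(s_1-\rho)\bigr)>0$ for $\beta<\sigma_1$ together with the Hadamard formula $\zeta'/\zeta(s_1)=\sum_{|t-\gamma|\le1}1/(s_1-\rho)+O(\log t)$, one bounds the near-critical-line piece $S_1$ at a general $s$ by
\[
|S_1|\le 2x^{1/2+a-\sigma}\,\Re\Bigl(\frac{\zeta'}{\zeta}(s_1)-\sum_{|s_1-\rho|\le\delta_x}\frac{1}{s_1-\rho}\Bigr)+O\bigl(x^{1/2+a-\sigma}\log t+\cdots\bigr).
\]
At $\sigma=\sigma_1$ the prefactor is $2x^{-\delta_x}=2e^{-1}<1$, so this inequality can be solved for $\zeta'/\zeta(s_1)-\sum 1/(s_1-\rho)$ itself, giving the bound $E_a(x,t)$. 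Feeding that back in for general $\sigma\ge\sigma_1$ yields the $x^{1/2+a-\sigma}$ decay and, after integrating in $\sigma$, the extra $1/\log x$ that makes $Y_a$ what it is. Without this contraction step the $\log x$ saving is lost, and the Corollary's $\log t/\log\log t$ bound would collapse to the trivial $\log t$.
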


%Although this theorem may be complicated thanks to considering inclusively situations, 
We can obtain some results on the behavior of $\log{\zeta(s)}$ 
by applying this theorem.
Actually, we give an important application as the following corollary, which immediately follows 
by applying Theorem \ref{main_thm} with $x = \l( \log{\frac{t}{2}} \r)^{\e_0 / 4}$.

\begin{corollary}	\label{LUBSI}
Let $\e_0$ be a small positive number and $t$ be a sufficiently large number, which
does not coincide with the ordinate of zeros of the Riemann zeta-function.
Assume SIZDC-$(l, \mathbf{1}, \Phi, \Psi)$ with 
$l(t) = \frac{1}{\log{\log{t}}}$, $\Phi(t) = (\log{t})^{\e_0}$ with $\e_0$ 
any fixed small positive constant, and 
$\Psi(t) = \e_0\log{\log{t}}$.
Then we have
\begin{align*}
\log{\zeta\l(\frac{1}{2} + it \r)} 
= \sum_{|\frac{1}{2} + it - \rho| \leq \frac{1}{\log{\log{t}}}}
\log\l| \frac{\frac{1}{2} + it - \rho}{\frac{1}{2} + \frac{8}{\e_0\log{\log{t}}} + it - \rho} \r|
 + O_{\e_0}\l( \frac{\log{t}}{\log{\log{t}}} \r).
\end{align*}
In particular, for any sufficiently large number $t$, we have
\begin{align*}
\log{\l|\zeta\l( \frac{1}{2} + it \r)\r|}
\leq \frac{C\log{t}}{\log{\log{t}}}, \quad
S(t) \ll_{\e_0} \frac{\log{t}}{\log{\log{t}}},
\end{align*}
where $C$ is a positive constant depending only on $\e_0$.
\end{corollary}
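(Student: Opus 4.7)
The plan is to apply Theorem \ref{main_thm} with $x = (\log(t/2))^{\e_0/4}$, reading off the corollary from the second case of the theorem at $\s = 1/2$. I would first verify the hypotheses: with $\Psi(t) = \e_0\log\log{t}$ one has $x = (\log(t/2))^{\e_0/4} \leq (\log(t/2))^{\e_0} = e^{\Psi(t/2)}$ for sufficiently large $t$, and $x \leq t^2$ is trivial. With this choice $\delta_x = 4/(\e_0 \log\log{(t/2)})$, so $2\delta_x$ equals the quantity $8/(\e_0\log\log{t})$ in the corollary up to a $(1+o(1))$ factor, and $\s_x = 1/2+2\delta_x$ matches the real part of the shifted argument in the corollary.

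Next I would control the error term $\delta_x E_{\delta_x}(x,t)$. The piece $\delta_x\log{t}$ is already of the target size $\log{t}/\log\log{t}$. The Dirichlet polynomial $|\sum_{n\leq x^2}\Lam_x(n)n^{-s_1}|$ is at most $\ll x = (\log{t})^{\e_0/4}$, which is negligible after multiplication by $\delta_x$. For the $G_{\delta_x}(x,t)$-terms, the key observation is that with our parameters $\log{x}/\log\Phi(t/2) = 1/4$, so the common ratio of the geometric series defining $F_{\delta_x}(x,t)$ equals $e^{-1/2}$, making $F_{\delta_x}(x,t) = O(1)$; combined with the bounds $x^{-\delta_x}, \Phi(t/2)^{-\delta_x} = O(1)$ and $G_{\delta_x}(x,t) \ll \delta_x(\log{x})(\log{t}) = O(\log{t})$, multiplication by the outer factor $\delta_x$ produces $O(\log{t}/\log\log{t})$ in each piece.

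I would then handle the three sums appearing in \eqref{maineq_2}. The prime sum over $n \leq x^2$ is bounded trivially by $\ll \sum_{n\leq x^2}\Lam(n)/(\sqrt{n}\log{n}) \ll x$, of lower order. The secondary zero sum $\sum_{|s_x-\rho|\leq\delta_x}\log|(s_x-\rho)/(\delta_x+i(t-\gamma))|$ consists of non-positive terms whose absolute value can be controlled by a Jensen-type argument combined with the SIZDC bound on the number of relevant zeros: the strip of length $O(\delta_x)$ is covered by $O(1)$ sub-intervals of length $l(T) = 1/\log\log{T}$, each contributing $\ll \log{T}/\log\log{T}$ zeros with $\b \geq 1/2 + \delta_x$. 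To match the primary zero sum to the form in the corollary, I would switch the indexing set from $\{|t-\gamma|\leq\delta_x\}$ to the disk $\{|1/2+it-\rho|\leq 1/\log\log{t}\}$: the symmetric difference consists of zeros whose log-ratio contribution is $O(1)$, and the total zero count there is $O(\log{t}/\log\log{t})$ by the same covering argument.

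Finally, I would derive the stated consequences. Since the displayed main sum in the corollary is real, $S(t) = \pi^{-1}\Im\log{\zeta(1/2+it)}$ is controlled entirely by the error $O_{\e_0}(\log{t}/\log\log{t})$. For the upper bound on $\log|\zeta(1/2+it)|$, every zero $\rho$ in the disk satisfies $\b \leq 1/2 + 1/\log\log{t} < 1/2 + 4/(\e_0\log\log{t})$ for $\e_0$ sufficiently small, so $|s_x - \rho| \geq |s - \rho|$ and each term of the main sum is $\leq 0$, yielding the claimed bound on $\log|\zeta(1/2+it)|$. The main obstacle I anticipate is the delicate analysis of the secondary zero sum when $\rho$ lies very close to $s_x$, where individual logarithmic terms can blow up and must be handled via a weighted Jensen-type inequality combined with the SIZDC count; a secondary subtlety is keeping the constants uniform in $\e_0$ while ensuring the geometric series in $F_{\delta_x}(x,t)$ really collapses to $O(1)$ for the chosen parameters.
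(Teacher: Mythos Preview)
Your overall plan matches the paper's: apply Theorem~\ref{main_thm} with $x=(\log(t/2))^{\e_0/4}$ and read the corollary off from the second case at $\s=1/2$. Your verification of the hypotheses, your analysis of $\delta_x E_{\delta_x}(x,t)$ (in particular the observation that $\log x/\log\Phi(t/2)=1/4$ makes the geometric series in $F_{\delta_x}$ collapse to $O(1)$), your handling of the prime sum, and your derivation of the bounds on $\log|\zeta|$ and $S(t)$ from the final formula are all correct.

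There is, however, a genuine gap in your treatment of the secondary zero sum $\sum_{|s_x-\rho|\leq\delta_x}\log\bigl|(s_x-\rho)/(\delta_x+i(t-\gamma))\bigr|$. This sum \emph{by itself} cannot be shown to be $O(\log t/\log\log t)$: nothing in the hypotheses prevents a single zero $\rho$ from lying within distance $\epsilon$ of $s_x$ for arbitrarily small $\epsilon>0$, producing a term of size $\log(\epsilon/\delta_x)$, and a Jensen-type inequality applied to $\zeta$ around $s_x$ only relates the sum to $\log|\zeta(s_x)|$, which is equally uncontrolled. The missing idea is a cancellation: every zero with $|s_x-\rho|\leq\delta_x$ automatically satisfies $|t-\gamma|\leq\delta_x$, so it also appears in the primary sum, and for such $\rho$ the two contributions telescope to
\[
\log\Bigl|\frac{s-\rho}{s_x-\rho}\Bigr|+\log\Bigl|\frac{s_x-\rho}{\delta_x+i(t-\gamma)}\Bigr|
=\log\Bigl|\frac{s-\rho}{\delta_x+i(t-\gamma)}\Bigr|.
\]
Since these zeros have $\b\in[1/2+\delta_x,\,1/2+3\delta_x]$ and $|t-\gamma|\leq\delta_x$, both numerator and denominator are $\asymp\delta_x$, so each combined term is $O(1)$; the SIZDC bounds their number by $O_{\e_0}(\log t/\log\log t)$. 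After this cancellation, what remains of the primary sum outside the corollary's disk consists of zeros with $|t-\gamma|\leq\delta_x$, $|s_x-\rho|>\delta_x$, and $|1/2+it-\rho|>1/\log\log t$; for these the triangle inequality $\bigl||s-\rho|-|s_x-\rho|\bigr|\leq 2\delta_x$ gives $\log|(s-\rho)/(s_x-\rho)|=O_{\e_0}(1)$, and their total count is $\ll\log t/\log\log t$ by Lemma~\ref{lem_near_critical}.
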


By this corollary, we can understand that 
Littlewood's upper bound \eqref{LUB} can be obtained  
without assuming the Riemann Hypothesis. 
Actually, we can rewrite the condition of Corollary \ref{LUBSI} into the following 
assertion, which is obviously weaker than the Riemann Hypothesis
: \textit{for any fixed small positive constant $\e_0$,
the estimate
\begin{align*}
N\l(\s, t, \frac{1}{\log{\log{t}}} \r)
\ll (\log{\log{t}})^{-1}(\log{t})^{1 - \e_0(\s - 1/2)}
\end{align*}
holds for any sufficiently large number $t$ and $\s \geq \frac{1}{2} + \frac{1}{\e_0\log{\log{t}}}$.}
%Hence, Corollary \ref{LUBSI} means that we success to weaken the Riemann Hypothesis 
%being the assumption to obtain Littlewood's upper bounds \eqref{LUB}  
%by a term of the distribution of zeros.

\section{\textbf{Preliminaries for the proof of Theorem \ref{main_thm}}}

In this section, we prepare some lemmas. 
These lemmas are necessary to prove the following theorem, 
which is a generalization of Theorem \ref{main_thm}. 

\begin{theorem}	\label{Gen_thm}
Let $t \geq 14$ not the ordinate of zeros of the Riemann zeta-function, 
$3 \leq x \leq t^2$, and put $\delta_x = (\log{x})^{-1}$.
Assume the SIZDC-$(l, v, \Phi, \Psi)$ on $[t - L, t + L]$ with $L$ defined by \eqref{def_L}, 
and let $\frac{1}{\Psi(t / 2)} \leq a \leq 1$ and $s_{1} = \s_{1} + it$ 
with $\s_{1} = \frac{1}{2} + a + \delta_x$.
If $\s_1 \leq \s \leq 2$, then we have
\begin{align}	\label{maineq_1}
\log{\zeta(s)}
= \sum_{|s - \rho| \leq \delta_x}\log\l| \frac{s - \rho}{\delta_x + i(t - \gamma)} \r| 
+ \sum_{2 \leq n \leq x^{2}}\frac{\Lam_{x}(n)}{n^{s} \log{n}} + O\l( Y_{a}(\s, x, t) \r),
\end{align}
where $Y_{a}(\s, x, t)$ is defined by \eqref{def_Y}.
Moreover, if $\frac{1}{2} \leq \s \leq \s_1$, then we have
\begin{align}	\label{maineq_2}
&\log{\zeta(s)} 
= \sum_{|t - \gamma| \leq \delta_x}\log\l| \frac{s - \rho}{s_1 - \rho} \r|
+\sum_{|s_1 - \rho| \leq \delta_x}\log\l| \frac{s_1 - \rho}{\delta_x + i(t - \gamma)} \r|+\\ \nonumber
&\quad + \sum_{2 \leq n \leq x^2}\frac{\Lam_{x}(n)}{n^{s_1} \log{n}}
+ O\l((\s_1 - \s)\l( 1 + \frac{a}{\delta_x} \r)^{2}E_{a}(x, t) + Y_{a}(\s_1, x, t)\r),
\end{align}
where $E_{a}(x, t)$ is defined by \eqref{def_E}.
\end{theorem}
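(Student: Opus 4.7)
The plan is to derive the first formula from a Selberg-type explicit identity and to deduce the second by integrating $\zeta'/\zeta$ horizontally from $s_1$ to $s$, using the SIZDC in both cases to bound the contribution from far-away zeros.

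First, using the lemmas of this section, I would establish an identity of the shape
\[
\log\zeta(s) = \sum_{n\leq x^2}\frac{\Lam_x(n)}{n^s\log n} + \frac{1}{\log x}\sum_\rho K_x(s-\rho) + (\text{contour error}),
\]
where $K_x(w) = w^{-2}(x^{-w} - x^{-2w})$ is the Selberg kernel obtained by Mellin inversion against the triangular weight defining $\Lam_x$; a contour shift through the line $\Re w = \sigma_1$ produces the $\frac{x^{1/2+a-\sigma}}{\log x}\bigl(|\sum_n \Lam_x(n)/n^{s_1}| + \log t\bigr)$ contribution to $Y_a$. I would then split the zero sum into three ranges of $\rho = \beta+i\gamma$: (i) the very close zeros $|s-\rho| \leq \delta_x$, for which $K_x(s-\rho)/\log x = \log\bigl|(s-\rho)/(\delta_x + i(t-\gamma))\bigr| + O(1)$ and which therefore recover the main zero sum of the theorem; (ii) zeros in the set $A$ of \eqref{def_A} with $\beta > \frac{1}{2} + 1/\Psi(t/2)$, whose contribution, after partial summation against $N(\sigma, T, l(T))$ bounded via the SIZDC on $[t-L, t+L]$ and a dyadic decomposition in $\beta$, produces the combination $F_a(x,t)\,G_a(x,t)$; and (iii) the remaining zeros, controlled by the classical Riemann--von Mangoldt density $O(\log t)$ against the decay of $K_x$, which yield the residual $\Phi(t/2)^{1/2-\sigma+\delta_x}/\log\Phi(t/2)$ term. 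Assembling these three pieces gives $Y_a(\sigma, x, t)$ and proves \eqref{maineq_1}.

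For $\frac{1}{2} \leq \sigma \leq \sigma_1$, I would write
\[
\log\zeta(s) - \log\zeta(s_1) = -\int_\sigma^{\sigma_1} \frac{\zeta'}{\zeta}(u + it)\,du,
\]
insert a partial-fraction expansion of $\zeta'/\zeta$ around $s$, and substitute the already-proved \eqref{maineq_1} for $\log\zeta(s_1)$. Integrating the pole contributions in $u$ converts them into $\sum_{|t-\gamma| \leq \delta_x} \log\bigl|(s-\rho)/(s_1-\rho)\bigr|$, while the remaining zeros are absorbed into an error of size $(\sigma_1-\sigma)(1 + a/\delta_x)^2 E_a(x,t) + Y_a(\sigma_1, x, t)$; the factor $(1+a/\delta_x)^2$ reflects that differentiating the Selberg kernel widens its effective support in the $\gamma$-direction by the ratio $(a+\delta_x)/\delta_x$, which is then squared when combined with the partial summation in $\sigma$.

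The main obstacle is step (ii): executing the partial summation and dyadic decomposition in $\beta$ so that the SIZDC bound reproduces exactly the geometric sum $F_a(x,t)$ in \eqref{def_F}, while carefully tracking how the truncation $|t-\gamma| \leq x^{3(\beta-1/2)}/\sqrt{\log x}$ defining $A$ interacts with the exponential decay of $K_x$. In particular, the cutoff index $[(\sigma_A - a)\log\Phi(t/2)]$ in \eqref{def_F} and the appearance of $\tau(a)$ in $G_a$ both arise from this step, and getting the constants to match will require careful bookkeeping of the ranges where SIZDC is available versus where one must fall back on the trivial Riemann--von Mangoldt bound.
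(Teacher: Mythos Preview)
Your plan for the first formula has a genuine gap. The kernel $K_x(w)=w^{-2}(x^{-w}-x^{-2w})$ is the one appearing in Lemma~\ref{lem_ME}, and that lemma is an explicit formula for $\zeta'/\zeta$, \emph{not} for $\log\zeta$; there is no identity of the shape you wrote for $\log\zeta$ directly from the lemmas of this section. More concretely, your claim that $K_x(s-\rho)/\log x = \log\bigl|(s-\rho)/(\delta_x+i(t-\gamma))\bigr| + O(1)$ for close zeros is false: expanding near $w=0$ gives $K_x(w)/\log x = 1/w + O(\log x)$, a simple pole rather than a logarithmic singularity. The logarithms in \eqref{maineq_1} cannot come from the kernel itself.

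The paper's route is to first assemble Proposition~\ref{uncon_rep} and Lemmas~\ref{lem_near}--\ref{lem_zero_real} into Proposition~\ref{fun_prop1}, which is a pointwise approximation
\[
\frac{\zeta'}{\zeta}(s) \;=\; \sum_{|s-\rho|\leq\delta_x}\frac{1}{s-\rho} - \sum_{n\leq x^2}\frac{\Lambda_x(n)}{n^s} + O(\cdot)
\]
valid for $\sigma\geq\sigma_1$, and then to \emph{integrate} this along the horizontal segment from $2+it$ to $\sigma+it$ via $\log\zeta(s)=\int_2^\sigma(\zeta'/\zeta)(\alpha+it)\,d\alpha+O(1)$. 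It is the integration $\int \frac{d\alpha}{(\alpha-\beta)+i(t-\gamma)}$ over the window $|\alpha-\beta|\leq\delta_x$ that produces $\log\bigl|(s-\rho)/(\delta_x+i(t-\gamma))\bigr|$; the bound $Y_a(\sigma,x,t)$ is simply the $\sigma$-integral of the error in Proposition~\ref{fun_prop1}, and the terms $F_a$, $G_a$ are already packaged there. Your outline for \eqref{maineq_2} is closer in spirit, but your explanation of the factor $(1+a/\delta_x)^2$ is not the actual mechanism: it arises inside Lemma~\ref{rep_near_c_Z} by combining the short-interval zero count of Lemma~\ref{lem_near_critical}, namely $N(t+a+\delta_x)-N(t)\ll(a+\delta_x)E_a(x,t)$, with the trivial bound $|s-\rho|^{-2}\ll\delta_x^{-2}$ on the dyadic shells $k\delta_x<|t-\gamma|\leq(k+1)\delta_x$, giving $(a+\delta_x)^2\delta_x^{-2}E_a(x,t)$; there is no ``widening of the kernel's support'' involved.
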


Theorem \ref{main_thm} immediately follows by this theorem because it is almost the same assertion as that 
of this theorem in the case of $a = \delta_x = \frac{1}{\log{x}}$, 
and we can easily obtain by evaluating the error term.
As mentioned in Section 1, the SIZDC, hence Theorem \ref{Gen_thm}, includes the unconditional case.
Actually, by taking $a = \s_A$, we obtain an assertion that is close to Theorem 2 in \cite{SCR}.

The following Lemma \ref{lem_ME} and Proposition \ref{uncon_rep} are unconditional.

%The idea of the proof of this theorem is to calculate the contribution of 
%non-trivial zeros of the Riemann zeta-function based on Selberg's method.
%Now, we can obtain the following interesting theorem by this theorem.

\begin{lemma}	\label{lem_ME}
Let $x > 1$. 
Then for any complex number $s$ not equal to $1$ or any zero of $\zeta(s)$, we have
\begin{align*}
\sum_{n \leq x^2}\frac{\Lam_{x}(n)}{n^s}
=& -\frac{\zeta'}{\zeta}(s) + \frac{x^{2(1 - s)} - x^{1 - s}}{(1 - s)^2 \log{x}}
- \sum_{\rho}\frac{x^{2(\rho - s)} - x^{\rho - s}}{(\rho - s)^2 \log{x}}\\
 & -\sum_{k = 1}^{\infty}\frac{x^{-2(2k + s)} - x^{-2k - s}}{(2k + s)^2 \log{x}}.
\end{align*}
\end{lemma}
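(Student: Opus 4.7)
The plan is to derive this identity by a standard contour-integration computation, exploiting the clean Mellin-transform representation of the truncation weight defining $\Lam_x$. Using $\f{1}{2\pi i}\int_{(c')}\f{z^w}{w^2}\,dw = \max(\log z,\,0)$ valid for $c'>0$ and $z>0$, one verifies directly that
\[
\Lam_x(n) \;=\; \f{\Lam(n)}{\log x}\cdot \f{1}{2\pi i}\int_{(c')}\f{(x^2/n)^w - (x/n)^w}{w^2}\,dw,
\]
since the integral evaluates to $\log x$, $\log(x^2/n)$, or $0$ according to whether $n\le x$, $x\le n\le x^2$, or $n\ge x^2$. Multiplying by $n^{-s}$, summing over $n$, and interchanging sum with integral (legitimate once $\Re w$ is large enough that $-\zeta'/\zeta(s+w)=\sum_n \Lam(n)/n^{s+w}$ converges absolutely as a Dirichlet series) yields the master representation
\[
\sum_{n\le x^2}\f{\Lam_x(n)}{n^s} \;=\; \f{1}{2\pi i}\int_{(c)} \f{x^{2w} - x^w}{w^2\log x}\l(-\f{\zeta'}{\zeta}(s+w)\r)dw,
\]
valid for any $c > \max(0,\,1-\Re s)$.

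Next I would shift the line of integration to the left and collect residues. Since $(x^{2w}-x^w)/(w^2\log x)=1/w+O(1)$ near $w=0$ while $-\zeta'/\zeta(s+w)$ is analytic there (using that $s$ is neither $1$ nor a zero), the integrand has a simple pole at $w=0$ contributing $-\zeta'/\zeta(s)$. A simple pole at $w=1-s$ coming from the pole of $\zeta$ at $1$ (where the residue of $-\zeta'/\zeta$ is $+1$) contributes $(x^{2(1-s)}-x^{1-s})/((1-s)^2\log x)$. Simple poles at $w=\rho-s$ for each non-trivial zero $\rho$ (where the residue of $-\zeta'/\zeta$ equals $-1$, counted with multiplicity) contribute the sum over $\rho$ with a minus sign, and simple poles at $w=-2k-s$ for $k\ge 1$ from the trivial zeros produce the final infinite series. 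Each residue matches the corresponding term in the claimed identity exactly.

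The remaining work is to push the contour to $\Re w = -\infty$ and check that the shifted vertical integral vanishes. Since $|x^{2w}|+|x^w| = x^{-2U}+x^{-U}$ on $\Re w = -U$, and $|\zeta'/\zeta(s+w)|$ admits standard polynomial bounds on well-chosen vertical lines $\Re w = -U_j$ avoiding trivial zeros, the vertical integral tends to zero as $U\to\infty$. The main technical obstacle, as in every explicit-formula argument of this type, is the conditional convergence of the sum over non-trivial zeros: the shift must be carried out on a rectangle whose horizontal sides at heights $\pm T$ are chosen to maintain a distance $\gg 1/\log T$ from every zero of $\zeta$, on which the classical estimate $\zeta'/\zeta(\s+iT)\ll(\log T)^2$ ensures that the horizontal contributions disappear as $T\to\infty$. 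After passage to these two limits, the four residue contributions assemble into the identity as stated.
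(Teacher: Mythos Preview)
Your argument is correct and is precisely the standard Perron-type contour-shift derivation of this explicit formula; the paper does not give its own proof but simply cites Selberg \cite{SS}, whose proof proceeds exactly along the lines you sketch. Your treatment is therefore essentially identical in approach to the original source, only spelled out in more detail than the paper itself provides.
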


\begin{proof}
This assertion is Lemma 1 in \cite{SS}.
\end{proof}

\begin{proposition}	\label{uncon_rep}
Let $x$, $t$, $a$ be numbers with $t\geq 14$, $3 \leq x \leq t^2$, $0 < a \leq 1$, 
and put $\delta_x = (\log{x})^{-1}$ and $s_1 = \s_1 + it$ with $\s_1 = \frac{1}{2} + a + \delta_x$.
For $\s \geq \s_1$, 
there exists a function $\theta(s)$ such that 
the inequality $|\theta(s)| \leq 2x^{\frac{1}{2} + a - \s} \leq 2e^{-1}$ and the following formula
\begin{align*}
&\frac{\zeta'}{\zeta}(s) - \sum_{|s - \rho| \leq \delta_x}\frac{1}{s - \rho} 
= -\sum_{n \leq x^2}\frac{\Lam_{x}(n)}{n^{s}}
+ \theta(s) \l( \frac{\zeta'}{\zeta}(s_1) - \sum_{|s_1 - \rho| \leq \delta_x}\frac{1}{s_1 - \rho} \r)\\
&- \sum_{|s - \rho| \leq \delta_x}\l( \frac{x^{2(\rho - s)} - x^{\rho - s}}{(\rho - s)^2 \log{x}} + \frac{1}{s - \rho} \r) 
- \underset{|\b - \frac{1}{2}| \geq a}{\underset{|s - \rho| > \delta_x}{\sum_{\rho \in A}}}\frac{x^{2(\rho - s)} - x^{\rho - s}}{(\rho - s)^2\log{x}}\\
&+ O\l(x^{1/2 + a - \s}\log{t} + x^{1/2 + a - \s}\us{\beta \geq \s_1}{\us{|s_1 - \rho| > \delta_x}{\sum_{|t - \gamma| \leq 1}}}\Re\l(\frac{-1}{s_1 - \rho}\r) \r)
\end{align*}
hold. Here the set $A$ is defined by \eqref{def_A}.
\end{proposition}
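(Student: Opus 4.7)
The plan is to begin from Lemma~\ref{lem_ME} applied at $s$ and rearrange by a careful partition of the zero sum. Lemma~\ref{lem_ME} gives
\[
\frac{\zeta'}{\zeta}(s) = -\sum_{n \leq x^2} \frac{\Lam_x(n)}{n^s} + T_1(s) - \sum_\rho W(\rho,s) - T_3(s),
\]
where $W(\rho,s) := \frac{x^{2(\rho-s)} - x^{\rho-s}}{(\rho-s)^2 \log x}$ and $T_1(s), T_3(s)$ denote the pole and trivial-zero contributions. Since $|1-s| \geq t/2$ and $x \leq t^2$, both $T_1(s)$ and $T_3(s)$ are of size $\ll x^{1/2+a-\sigma}\log t$, hence absorbed into the stated error.

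After moving $\sum_{|s-\rho|\leq\delta_x}\frac{1}{s-\rho}$ to the left-hand side, I would split $\sum_\rho W(\rho,s)$ into four mutually exclusive categories: (I) $|s-\rho|\leq\delta_x$; (II) $\rho\in A$ with $|\beta-\frac12|\geq a$ and $|s-\rho|>\delta_x$; (III) $\rho\in A$ with $|\beta-\frac12|<a$ and $|s-\rho|>\delta_x$; (IV) $\rho\notin A$. Category (I) combines with the subtracted $\frac{1}{s-\rho}$-sum to form the first explicit zero sum in the proposition, and category (II) forms the second verbatim.

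For category (IV), $\rho \notin A$ forces $|t-\gamma|>\min(t/2,\,x^{3(\beta-1/2)}/\sqrt{\log x})$. Separating the two cases and using the classical bound $N(T+1)-N(T) \ll \log T$ together with $|W(\rho,s)| \leq \frac{2 x^{\beta - \sigma}}{|\rho-s|^2 \log x}$, I expect to obtain the two error contributions of the proposition: a term of size $x^{1/2+a-\sigma}\log t$ from zeros with $|t-\gamma|$ large, and the truncated real-part sum $x^{1/2+a-\sigma}\sum_{|t-\gamma|\leq 1,\,\beta\geq\sigma_1,\,|s_1-\rho|>\delta_x}\Re\!\big(\tfrac{-1}{s_1-\rho}\big)$ absorbing the exceptional zeros to the right of $\sigma_1$ that fail the $A$-condition.

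The crux is category (III). For such $\rho$ one has $|x^{\rho-s}| \leq x^{1/2+a-\sigma}$, and the factorization $x^{\rho-s} = x^{s_1-s}\,x^{\rho-s_1}$ combined with the Taylor expansion $(\rho-s)^{-2} = (\rho-s_1)^{-2} + O((\sigma-\sigma_1)|s_1-\rho|^{-3})$ yields a representation $W(\rho,s) = x^{s_1-s}\,W(\rho,s_1) + R(\rho)$ with $R(\rho)$ of smaller order. Summing over (III) and invoking Lemma~\ref{lem_ME} at $s_1$ to rewrite the resulting sum of $W(\rho,s_1)$ in terms of $\frac{\zeta'}{\zeta}(s_1)$, one obtains a contribution of the form $\theta(s)\bigl(\frac{\zeta'}{\zeta}(s_1) - \sum_{|s_1-\rho|\leq\delta_x}\frac{1}{s_1-\rho}\bigr)$ for a scalar $\theta(s)$ proportional to $x^{s_1-s}$; since $|x^{s_1-s}| = e\cdot x^{1/2+a-\sigma}$, extracting a numerical factor of at most $2/e$ from the comparison delivers the target bound $|\theta(s)|\leq 2x^{1/2+a-\sigma}$. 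The hardest part will be carrying out the (III)-analysis cleanly: both the Taylor remainder $R(\rho)$ and the index-set mismatch between the (III)-sum and the full zero-sum appearing in Lemma~\ref{lem_ME} at $s_1$ must be routed into either the retained explicit sums or the stated error without inflating the bound on $\theta(s)$.
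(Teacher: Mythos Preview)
Your overall decomposition into categories (I)--(IV) matches the paper's, and your treatment of (I), (II), (IV) is essentially correct (although the $\Re(-1/(s_1-\rho))$ sum in the error does \emph{not} arise from (IV); in the paper, (IV) contributes only $O(x^{1/2-\sigma}\log t)$). The real problem is your handling of category (III).

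The proposed relation $W(\rho,s)=x^{s_1-s}W(\rho,s_1)+R(\rho)$ does not hold. The numerator of $W$ contains both $x^{\rho-s}$ and $x^{2(\rho-s)}$; the latter factors as $x^{2(s_1-s)}x^{2(\rho-s_1)}$, so no single scalar $x^{s_1-s}$ can be pulled out. The Taylor step $(\rho-s)^{-2}=(\rho-s_1)^{-2}+O((\sigma-\sigma_1)|s_1-\rho|^{-3})$ is also not uniformly valid: for $\rho$ in (III) one only knows $|s_1-\rho|>\delta_x$, whereas $\sigma-\sigma_1$ may be of order $1$, so the expansion need not converge. Finally, even if these issues were patched, invoking Lemma~\ref{lem_ME} at $s_1$ would introduce a term $x^{s_1-s}\sum_{n\le x^2}\Lambda_x(n)n^{-s_1}$, which is not present in the proposition's error and cannot be absorbed there.

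What the paper actually does for (III) is much more direct: it never tries to \emph{express} $S_1:=\sum_{(\mathrm{III})}W(\rho,s)$, only to \emph{bound} it. Using $|x^{2(\rho-s)}-x^{\rho-s}|\le 2x^{\beta-\sigma}\le 2x^{1/2+a-\sigma}$ and $|\rho-s|^2\ge(\sigma_1-\beta)^2+(t-\gamma)^2$, one gets
\[
|S_1|\le\frac{2x^{1/2+a-\sigma}}{\log x}\sum_{(\mathrm{III})}\frac{1}{\sigma_1-\beta}\cdot\frac{\sigma_1-\beta}{(\sigma_1-\beta)^2+(t-\gamma)^2}
\le 2x^{1/2+a-\sigma}\sum_{(\mathrm{III})}\Re\frac{1}{s_1-\rho},
\]
since $\sigma_1-\beta>\delta_x$ forces $(\log x)(\sigma_1-\beta)>1$. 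Then the classical partial-fraction estimate $\frac{\zeta'}{\zeta}(s_1)=\sum_{|t-\gamma|\le1}\frac{1}{s_1-\rho}+O(\log t)$ (not Lemma~\ref{lem_ME}) is used to replace the zero sum by $\Re\bigl(\frac{\zeta'}{\zeta}(s_1)-\sum_{|s_1-\rho|\le\delta_x}\frac{1}{s_1-\rho}\bigr)$ plus $O(\log t)$ plus the residual sum over zeros with $\beta\ge\sigma_1$; this last piece is exactly the $\Re(-1/(s_1-\rho))$ term in the stated error. The function $\theta(s)$ is then defined \emph{a posteriori} so that the resulting inequality becomes an equality, which is possible with $|\theta(s)|\le 2x^{1/2+a-\sigma}$ precisely because $|S_1|$ has already been bounded by that factor times the modulus of the parenthesized quantity.
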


\begin{proof}
We find that 
\begin{align*}
\frac{\zeta'}{\zeta}(s) 
= -\sum_{n \leq x^2}\frac{\Lam_{x}(n)}{n^{s}}
+ \frac{x^{2(1 - s)} - x^{1 - s}}{(1 - s)^2 \log{x}}
-\sum_{\rho}\frac{x^{2(\rho - s)} - x^{\rho - s}}{(\rho - s)^2 \log{x}}
+O\l( \frac{x^{-2 - \s}}{t^2 \log{x}} \r)
\end{align*}
by Lemma \ref{lem_ME}. We divide the sum on non-trivial zeros as
\begin{align*}
\sum_{\rho}\frac{x^{2(\rho - s)} - x^{\rho - s}}{(\rho - s)^2 \log{x}}
&= \l( \sum_{\rho \in A} + \sum_{\rho \not\in A} \r)\frac{x^{2(\rho - s)} - x^{\rho - s}}{(\rho - s)^2 \log{x}},
\end{align*}
where the set $A$ is defined by \eqref{def_A}.
We observe
\begin{align*}
\sum_{\rho \not\in A}\frac{x^{2(\rho - s)} - x^{\rho - s}}{(\rho - s)^2 \log{x}}
&=\l(\us{|t - \gamma| \leq 1}{\sum_{\rho \not\in A}} 
+ \sum_{k = 1}^{\infty}\us{k < |t - \gamma| \leq k + 1}{\sum_{\rho \not\in A}} \r)
\frac{x^{2(\rho - s)} - x^{\rho - s}}{(\rho - s)^2 \log{x}}\\
&\ll x^{1/2 - \s}\log{t} + x^{1/2 - \s}\sum_{k = 1}^{\infty}\frac{\log(t + k)}{k^{4/3}}
\ll x^{1/2 - \s}\log{t}.
\end{align*}
Therefore, if $3 \leq x \leq t^2$, then one has
\begin{align*}
\frac{\zeta'}{\zeta}(s) =
-\sum_{n \leq x^2}\frac{\Lam_{x}(n)}{n^{s}}
- \sum_{\rho \in A}\frac{x^{2(\rho - s)} - x^{\rho - s}}{(\rho - s)^2\log{x}}
+ O\l(x^{1/2 - \s}\log{t} \r)
\end{align*}
for $\s \geq \frac{1}{2}$ and $t \geq 14$.
Now we divide the second sum on the right hand side as 
\begin{align*}
\l( \underset{\l|\b - \frac{1}{2}\r| < a}{\sum_{\rho \in A}} 
+ \underset{|\b - \frac{1}{2}| \geq a}{\sum_{\rho \in A}} \r)\frac{x^{2(\rho - s)} - x^{\rho - s}}{(\rho - s)^2 \log{x}}
=: S_1 + S_2.
\end{align*}
Then we have
\begin{align*}
&|S_1| 
\leq \frac{1}{\log{x}}\underset{\l| \b - \frac{1}{2} \r| < a}{\sum_{\rho \in A}}\frac{2 x^{\beta - \s}}{(\s - \beta)^2 + (t - \gamma)^2}\\
&\leq \frac{2 x^{\frac{1}{2} + a - \s}}{\log{x}}\underset{\l| \b - \frac{1}{2} \r| < a}{\sum_{\rho \in A}}
\frac{\s_1 - \beta}{(\s_1 - \beta)^2 + (t - \gamma)^2}\frac{1}{\s_1 - \beta}
\leq 2x^{\frac{1}{2} + a - \s}
\underset{\l| \b - \frac{1}{2} \r| < a}{\sum_{\rho \in A}}\Re\l(\frac{1}{s_1 - \rho}\r).
\end{align*}
Here we use the following basic properties (cf. Section 15 \cite{DM})
\begin{gather}
\frac{\zeta'}{\zeta}(s)
= \sum_{|t - \gamma| \leq 1}\frac{1}{s - \rho}
+ O(\log{t}), \label{PEFZ}\\ \nonumber
\sum_{|t - \gamma| > 1}\l|\Re\l( \frac{1}{s - \rho} \r)\r| \ll \log{t},
\end{gather}
and so we obtain
\begin{align*}
&\underset{|\b - \frac{1}{2}| < a}{\sum_{\rho \in A}}\Re\l(\frac{1}{s_1 - \rho}\r)
= \Re\l( \frac{\zeta'}{\zeta}(s_1) - \us{|\b - \frac{1}{2}| \geq a}
{\sum_{|t - \gamma| \leq 1}}\frac{1}{s_1 - \rho} 
-\us{\rho \not\in A}{\us{|\b - \frac{1}{2}| < a}
{\sum_{|t - \gamma| \leq 1}}}\frac{1}{s_1 - \rho} \r)+ O(\log{t})\\
&\leq \Re\l( \frac{\zeta'}{\zeta}(s_1) - \sum_{|s_1 - \rho| \leq \delta_x}\frac{1}{s_1 - \rho} \r) 
+ \us{\b \geq \s_1}{\us{|s_1 - \rho| > \delta_x}{{\sum_{|t - \gamma| \leq 1}}}}\Re\l( \frac{-1}{s_1 - \rho} \r)
+O(\log{t}).
%= \Re\l( \frac{\zeta'}{\zeta}(s_1) - \us{((\rho \not\in A) \lor (|\b - \frac{1}{2}| > a))}
%{\sum_{|t - \gamma| \leq 1 \land}}\frac{1}{s_1 - \rho} \r) + O(\log{t})\\
%&\leq \Re\l( \frac{\zeta'}{\zeta}(s_1) - \us{\b \geq \s_1}{\us{\rho \not\in A}{\sum_{|t - \gamma| \leq 1}}}\frac{1}{s_1 - \rho} \r) 
%+O(\log{t}).
\end{align*}
Hence, we have
\begin{align*}
|S_1| 
\leq & 2x^{\frac{1}{2} + a - \s}
\Re\l( \frac{\zeta'}{\zeta}(s_1) - \sum_{|s_1 - \rho| \leq \delta_x}\frac{1}{s_1 - \rho} \r)\\
&+O\l( x^{\frac{1}{2} + a - \s}\l(\log{t} + 
\us{\beta \geq \s_1}{\us{|s_1 - \rho| > \delta_x}{\sum_{|t - \gamma| \leq 1}}}\Re\l(\frac{-1}{s_1 - \rho}\r)\r)\r).
\end{align*}
In particular, we see that $2x^{\frac{1}{2} + a - \s} \leq 2 e^{-1}$ for 
$\s \geq \frac{1}{2} + a + \delta_x$.
From the above estimates, we obtain this proposition.
\end{proof}

\begin{lemma}	\label{lem_near}
Let $t \geq 14$, $x \geq 3$, and $\delta_x = (\log{x})^{-1}$.
Assume the SIZDC-$(l, v, \Phi, \Psi)$ on $[t - \delta_x, t + \delta_x]$, and let $\frac{1}{\Psi(t / 2)} \leq a \leq 1$.
For $\s \geq \frac{1}{2} + a + \delta_x$, we have
\begin{align*}
\sum_{|s - \rho| \leq \delta_x}\l( \frac{x^{2(\rho - s)} - x^{\rho - s}}{(\rho - s)^2 \log{x}} + \frac{1}{s - \rho} \r)
\ll G_{a}(x, t) \Phi(t / 2)^{1/2 - \s + \delta_x},
\end{align*}
where the function $G_{a}(x, t)$ is defined by \eqref{def_G}. 
\end{lemma}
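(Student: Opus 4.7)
The plan is to combine a pointwise bound on each summand with a SIZDC-based count of the contributing zeros.

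For the pointwise estimate, fix a zero $\rho$ with $|s - \rho| \le \delta_x$, write $w = \rho - s$, and set $u = w\log{x}$, so that $|u| \le 1$. Since $x^w = e^u$ and $x^{2w} = e^{2u}$, a direct manipulation gives
\[
\frac{x^{2(\rho - s)} - x^{\rho - s}}{(\rho - s)^2 \log{x}} + \frac{1}{s - \rho}
= (\log{x})\,\frac{e^{2u} - e^{u} - u}{u^2}
= (\log{x})\sum_{k \geq 2}\frac{2^k - 1}{k!}\,u^{k-2}.
\]
Since $|u| \leq 1$, the last series is bounded by $e^2$, and each summand of the original sum is therefore $O(\log{x})$.

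For the counting step, every $\rho = \beta + i\gamma$ contributing to the sum satisfies $\beta \geq \sigma - \delta_x \geq \tfrac{1}{2} + a$ and $|t - \gamma| \leq \delta_x$. The assumption $a \geq 1/\Psi(t/2)$ combined with the weak monotonicity of $\Psi$ yields $\sigma - \delta_x \geq \tfrac{1}{2} + 1/\Psi(T)$ for every $T \in [t - \delta_x, t + \delta_x]$, so the SIZDC-$(l, v, \Phi, \Psi)$ may be applied with $\sigma$ replaced by $\sigma - \delta_x$. When $l(t - \delta_x) \geq 2\delta_x$, a single application at $T = t - \delta_x$ contains the window $[t - \delta_x, t + \delta_x]$; otherwise, cover the window by $O(1 + \delta_x/l(t - \delta_x))$ consecutive translates and sum the bounds. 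Using the weak monotonicity of $l$, $v$, $\Phi$ to absorb values at $T$ into values at $t/2$, both cases lead to
\[
\#\l\{\rho : |s - \rho| \leq \delta_x\r\}
\ll (l(t/2) + \delta_x)\, v(t/2)\, (\log{t})\, \Phi(t/2)^{1/2 - \sigma + \delta_x}.
\]
Multiplying by the per-summand bound $O(\log{x})$ reproduces precisely $G_{a}(x, t)\, \Phi(t/2)^{1/2 - \sigma + \delta_x}$ when $\tau(a) = 1$.

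Finally, if $\tau(a) = 0$, i.e.\ $\sigma_A < a$, the sum is empty: any $\rho$ occurring has $\beta \geq \tfrac{1}{2} + a > \tfrac{1}{2}$, so $x^{3(\beta - 1/2)} > 1 > 1/\sqrt{\log{x}}$, and hence $|t - \gamma| \leq \delta_x \leq x^{3(\beta - 1/2)}/\sqrt{\log{x}}$. Together with $\delta_x \leq t/2$ this places $\rho \in A$, forcing $\beta \leq \sigma_A < a$, which contradicts $\beta \geq \tfrac{1}{2} + a$. The most delicate part of the execution is the covering argument in the regime $l(t/2) < 2\delta_x$, where one must arrange the translates so that the extra overlap contributes only the $\delta_x$ summand inside the factor $l(t/2) + \delta_x$; the Taylor bookkeeping and the $\tau(a) = 0$ check are routine.
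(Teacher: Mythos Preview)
Your proof is correct and follows the same strategy as the paper: bound each summand by $O(\log x)$ via the Taylor expansion of $(e^{2u}-e^u-u)/u^2$, then control the number of contributing zeros using the SIZDC. The paper's proof is simply terser---it asserts the zero count $\sum_{|s-\rho|\le\delta_x}1 \ll G_a(x,t)\Phi(t/2)^{1/2-\sigma+\delta_x}/\log x$ without spelling out either the covering argument when $l$ is small compared to $\delta_x$ or the $\tau(a)=0$ case, both of which you make explicit.
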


\begin{proof}
By the Taylor expansion, if $|s - \rho| \leq \delta_x$, then we see that
\begin{align*}
&\l|\frac{x^{2(\rho - s)} - x^{\rho - s}}{(\rho - s)^2 \log{x}} + \frac{1}{s - \rho}\r|
= \frac{1}{\log{x}}\l|\sum_{n = 2}^{\infty}\frac{(\rho - s)^{n - 2}}{n!}\l\{ (2\log{x})^n - (\log{x})^n \r\}\r|\\
&\leq \log{x}\sum_{n = 2}^{\infty}\frac{2^n}{n!}\l( |\rho - s|\log{x} \r)^{n-2}
\ll \log{x}.
\end{align*}
%On the other hand, one has
%\begin{align*}
%(2\log{x})^n - (\log{x})^n
%= \log{x}\sum_{k = 0}^{n - 1}2^{k}(\log{x})^{n} 
%\leq n (\log{x})(2\log{x})^{n-1}.
%\end{align*}
%Therefore, for $|s - \rho| \leq \delta_x$, we have
%\begin{align*}
%\l|\frac{x^{2(\rho - s)} - x^{\rho - s}}{(\rho - s)^2 \log{x}} + \frac{1}{s - \rho}\r|
%\leq \sum_{n = 2}^{\infty}\frac{|\rho - s|^{n - 2}}{(n - 1)!}(2\log{x})^{n - 1} 
%\ll \log{x}.
%\end{align*}
Hence, by the assumption SIZDC-$(l, v, \Phi, \Psi)$, we have
\begin{align*}
\sum_{|s - \rho| \leq \delta_x}\l( \frac{(xy)^{\rho - s} - x^{\rho - s}}{(\rho - s)^2 \log{y}} + \frac{1}{s - \rho} \r)
\ll \log{x} \sum_{|s- \rho| \leq \delta_x} 1
\ll G_{a}(x, t) \Phi(t / 2)^{1/2 - \s + \delta_x}.
\end{align*}
This completes the proof of this lemma.
\end{proof}

\begin{lemma}	\label{lem_zero1}
Let $t \geq 14$ and $x \geq 3$, and put $\delta_x = (\log{x})^{-1}$.
Assume the SIZDC-$(l, v, \Phi, \Psi)$ on $[t - L, t + L]$ with $L$ defined by \eqref{def_L}, 
and let $\frac{1}{\Psi(t / 2)} \leq a \leq 1$.
For $\s \geq \frac{1}{2} + a + \delta_x$, we have
\begin{align*}
\underset{|\b - \frac{1}{2}| \geq a}{\underset{|s - \rho| > \delta_x}{\sum_{\rho \in A}}}\frac{x^{2(\rho - s)} - x^{\rho - s}}{(\rho - s)^2\log{x}}
\ll G_{a}(x, t) x^{1/2 - \s} F_{a}(x, t),
\end{align*}
where the functions $F_{a}(x, t)$, $G_{a}(x, t)$ are defined by \eqref{def_F} and \eqref{def_G}, respectively.
\end{lemma}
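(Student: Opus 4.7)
The plan is to partition the zeros by horizontal strips in $\beta$ of width $1/\log\Phi(t/2)$ and apply the SIZDC to each strip together with a dyadic decomposition in $|\gamma-t|$. First I would use the elementary bound $|x^{2(\rho-s)} - x^{\rho-s}| \le 2\max(x^{2(\beta-\sigma)}, x^{\beta-\sigma})$ and $|\rho - s|^2 \ge \max(\delta_x^2, (\gamma - t)^2)$, which is valid because $|s-\rho| > \delta_x$, to reduce the summand to a power of $x$ in the $\beta$-variable times the Cauchy weight $1/|\rho-s|^2$. The sum splits naturally along $\beta \ge 1/2 + a$ versus $\beta \le 1/2 - a$; in the latter range, the functional-equation symmetry $\rho \mapsto 1 - \bar\rho$ sends a zero with real part $\le 1/2 - a$ to one with real part $\ge 1/2 + a$ and preserves membership in $A$ (since the constraint in \eqref{def_A} is weaker for $1-\bar\rho$ when $\beta \le 1/2$), which transfers the counting problem to the SIZDC-accessible side.

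For $\beta \ge 1/2 + a$, set $\beta_k = 1/2 + a + k/\log\Phi(t/2)$ and consider the strip $\beta \in [\beta_k, \beta_{k+1})$ for $k = 0, 1, \ldots, [(\sigma_A - 1/2 - a)\log\Phi(t/2)]$. On this strip the numerator is bounded by $2 x^{2(\beta_{k+1}-\sigma)}$ or $2 x^{\beta_{k+1}-\sigma}$, whichever is larger. The main task is to estimate
\begin{equation*}
\sum_{\substack{\rho \in A \\ \beta_k \le \beta < \beta_{k+1} \\ |s - \rho| > \delta_x}} \frac{1}{|\rho-s|^2}
\end{equation*}
by dyadically partitioning $|\gamma-t|$. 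For the innermost piece $|\gamma - t| \le l(t/2) + \delta_x$ one applies the SIZDC with $O(1)$ intervals of length $l(t/2)$, producing at most $\ll (l(t/2) + \delta_x)\, v(t/2)\,\log t \cdot \Phi(t/2)^{-a-k/\log\Phi(t/2)}$ zeros, each weighted by $\delta_x^{-2} = \log^2 x$. For the dyadic shell $|\gamma - t| \in [2^{j-1}(l(t/2)+\delta_x),\, 2^{j}(l(t/2)+\delta_x)]$ with $j \ge 1$, one covers by $\sim 2^j$ SIZDC intervals (all contained in $[t - L, t+L]$ because $\rho \in A$ forces $|\gamma - t| \le L$) and weights each zero by $\ll 4^{-j}(l(t/2)+\delta_x)^{-2}$, so the geometric series over $j$ is dominated by the innermost term.

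The resulting dyadic sum is therefore $\ll (l(t/2) + \delta_x)\, v(t/2)\, \log t\, \log^2 x \cdot \Phi(t/2)^{-a-k/\log\Phi(t/2)}$. Multiplying by the numerator bound $\ll x^{2(\beta_{k+1}-\sigma)}/\log x$, invoking $\sigma \ge 1/2 + a + \delta_x$ (which makes $x^{1/2+a-\sigma} \le e^{-1}$) and the identity $\Phi(t/2)^{1/\log\Phi(t/2)} = e$, the contribution from the $k$-th strip simplifies to
\begin{equation*}
\ll G_a(x,t) \cdot x^{1/2-\sigma} \cdot \left(\frac{x}{\Phi(t/2)}\right)^{a} \left(\frac{x^2}{\Phi(t/2)}\right)^{(k+1)/\log\Phi(t/2)}.
\end{equation*}
Summing the geometric series over $k$ reproduces $F_a(x,t)$ and yields the claimed bound.

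The main obstacle is the bookkeeping in the dyadic counting, and in particular the verification that every SIZDC interval lies inside $[t - L, t+L]$; this is precisely why $L$ in \eqref{def_L} is chosen as the maximum of $x^{3(\beta - 1/2)}/\sqrt{\log x}$ over $\rho \in A$. A secondary subtlety is the boundary case $a > \sigma_A$, where $\tau(a) = 0$ forces $G_a$ to vanish: here one checks that no zero with $\beta \ge 1/2 + a$ belongs to $A$, and by the functional-equation correspondence no zero with $\beta \le 1/2 - a$ belongs to $A$ either, so the sum is empty and the bound holds trivially.
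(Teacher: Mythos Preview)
Your approach is essentially the same as the paper's: both slice $\beta$ into strips of width $1/\log\Phi(t/2)$ and partition $|\gamma-t|$ (the paper into the three ranges $|t-\gamma|\le\delta_x$, $(\delta_x,1]$, $(1,L]$ with subdivision in steps of $\delta_x$ and $1$ respectively, you dyadically from scale $l(t/2)+\delta_x$) before applying the SIZDC cell by cell; the paper also leaves the $\beta\le\tfrac12-a$ symmetry implicit rather than spelling it out. One minor slip in your write-up: the innermost piece $|\gamma-t|\le l(t/2)+\delta_x$ in general needs about $(l(t/2)+\delta_x)/l(t/2)$ SIZDC windows rather than $O(1)$, but the zero-count bound $(l(t/2)+\delta_x)\,v(t/2)\log t\cdot\Phi(t/2)^{1/2-\beta_k}$ you record is exactly what that covering produces, so the argument goes through.
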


\begin{proof}
It is clear that this lemma holds in the case $a + \frac{1}{2} \geq \s_A$. 
Therefore, we consider the case $a + \frac{1}{2} < \s_A$.
We divide the sum as
\begin{align*}
\us{|\b - \frac{1}{2}| \geq a}{\us{|s - \rho| > \delta_x}{\sum_{\rho \in A}}}\frac{x^{2(\rho - s)} - x^{\rho - s}}{(\rho - s)^2}
&= \l( \us{|\b - \frac{1}{2}| \geq a}{\us{|s - \rho| > \delta_x}{\sum_{|t - \gamma| \leq \delta_x}}}
+\us{|\b - \frac{1}{2}| \geq a}{\us{\delta_x < |t - \gamma| \leq 1}{\sum_{\rho \in A}}} + 
\us{|\b - \frac{1}{2}| \geq a}{\us{1 < |t - \gamma| \leq L}{\sum_{\rho \in A}}} \r)
\frac{x^{2(\rho - s)} - x^{\rho - s}}{(\rho - s)^2}\\
&=: S_3 + S_4 + S_5.
\end{align*}
First we consider $S_{3}$. 
Let $H_1 = \log{\Phi(t / 2)}$ and $\delta_1 = H_{1}^{-1}$.
Then, by the assumption SIZDC-$(l, v, \Phi, \Psi)$, we can find that
\begin{align*}
S_3 
&\ll (\log{x})^2 \sum_{k = 0}^{[(\s_A - a - 1/2)H_1]}\us{\frac{1}{2} + a + k\delta_1 \leq \b < \frac{1}{2} + a + (k + 1)\delta_1}
{\sum_{|t - \gamma| \leq \delta_x}}\l(x^{2(\b - \s)} + x^{\b - \s}\r)\\
&\ll (\log{x})^2 x^{1/2 - \s + a}\sum_{k = 0}^{[(\s_A - a - 1/2)H_1]}
\us{\frac{1}{2} + a + k\delta_1 \leq \b < \frac{1}{2} + a + (k + 1)\delta_1}{\sum_{|t - \gamma| \leq \delta_x}}x^{2(k + 1)\delta_1}\\
&\ll G_{a}(x, t) (\log{x}) x^{1/2 - \s} \l(\frac{x}{\Phi(t/2)} \r)^{a}
\sum_{k = 0}^{[(\s_A- a - \frac{1}{2})H_1]}\l( \frac{x^2}{\Phi(t/2)} \r)^{(k + 1)\delta_1}\\
&= G_{a}(x, t) (\log{x}) x^{1/2 - \s} F_{a}(x, t).
\end{align*}
Here the symbol $[\cdot]$ indicates the Gaussian symbol.

Next we consider $S_5$. 
Note that the inequality $t - L \geq \frac{t}{2}$ holds by the definition of $L$.
By calculating in the same manner as $S_3$, we find that
\begin{align*}
S_5 
&\ll \sum_{m = 1}^{\l[ L \r]}\underset{\frac{1}{2} + a \leq \b \leq \s_A}
{\sum_{m < |t - \gamma| \leq m + 1}}\frac{x^{2(\b - \s)} + x^{\b - \s}}{(t - \gamma)^2}\\
&\leq \sum_{m = 1}^{\l[ L \r]}\frac{1}{m^2}\sum_{k = 0}^{[(\s_A- a - \frac{1}{2})H_1]}
\underset{\frac{1}{2} + a + k\delta_1 \leq \b < \frac{1}{2} + a + (k + 1)\delta_1}
{\sum_{m < |t - \gamma| \leq m + 1}}\l(x^{2(\b - \s)} + x^{\b - \s}\r)\\
&\ll x^{1/2 - \s + a}\sum_{m = 1}^{\l[L\r]}\frac{1}{m^2}\sum_{k = 0}^{[(\s_A - a - \frac{1}{2})H_1]} x^{2(k + 1) \delta_1}
\underset{\frac{1}{2} + a + k\delta_1 \leq \b < \frac{1}{2} + a + (k + 1)\delta_1}{\sum_{m < |t - \gamma| \leq m + 1}}1\\
%&\ll x^{1 / 2 - \s} v(t) (\log{t})\sum_{m = 1}^{[t/2]}\frac{1}{m^2}\l( \frac{x}{\Phi(t)} \r)^{a}\sum_{k = 0}^{[(1/2 - a)H_1]}\l(\frac{x^{2}}{\Phi(t)}\r)^{(k + 1)\delta_{1}} \\
&\ll v(t/2) (\log{t}) x^{1 / 2 - \s}  F_{a}(x, t)
\end{align*}
by the assumption SIZDC.

Finally, we consider $S_4$. Put $H = \log{x}$. By the assumption SIZDC, we find that
\begin{align*}
|S_4| 
&\leq 2\underset{\frac{1}{2} + a \leq \b < 1}{\sum_{\delta_x < |t - \gamma| \leq 1}}\frac{x^{2(\b - \s)} + x^{\b - \s}}{(t - \gamma)^2}\\
&\leq 2\sum_{m = 1}^{[H]}\sum_{k = 0}^{[(\s_A- a - \frac{1}{2})H_1]}\underset{\frac{1}{2} + a + k\delta_1 \leq \b < \frac{1}{2} + a + (k + 1)\delta_1}
{\sum_{m\delta_x < |t - \gamma| \leq (m + 1)\delta_x}}
\frac{x^{2(\b - \s)} + x^{\b - \s}}{(t - \gamma)^2}\\
&\ll x^{1/2 - \s + a}\sum_{m = 1}^{[H]}\sum_{k = 2}^{[(\s_A- a - \frac{1}{2})H_1]}x^{2(k + 1)\delta_1}
\underset{\frac{1}{2} + a +  k\delta_1 \leq \b < \frac{1}{2} + a + (k + 1)\delta_1}{\sum_{m\delta_x < |t - \gamma| \leq (m + 1)\delta_x}}
\frac{1}{m^2 \delta_{x}^{2}}\\
%&\ll x^{1 / 2 - \s} (l(t) + \delta_x) v(t) (\log{x})^2 (\log{t}) \sum_{m = 1}^{[H]}\frac{1}{m^2}
%\l( \frac{x}{\Phi(t)} \r)^{a}\sum_{k = 2}^{[(1/2 - a)H_1]}\l(\frac{x^2}{\Phi(t)}\r)^{(k + 1)\delta_1}\\
&\ll G_{a}(x, t) (\log{x}) x^{1 / 2 - \s} F_{a}(x, t).
\end{align*}

From the above estimates, this lemma holds.
\end{proof}

\begin{lemma}	\label{lem_zero_real}
Let $t \geq 14$ and $x \geq 3$, and put $\delta_x = (\log{x})^{-1}$.
Assume the SIZDC-$(l, v, \Phi, \Psi)$ on $[t - 1, t + 1]$.
For $\s \geq \frac{1}{2} + \frac{1}{\Psi(t / 2)}$, we have
\begin{align*}
\underset{\beta \geq \s}{\underset{|s - \rho| > \delta_x}{\sum_{|t - \gamma| \leq 1}}}\Re\l(\frac{-1}{s - \rho} \r)
\ll G_{\s}(x, t) \l(\frac{\log{x}}{\log{\Phi(t/2)}} + 1 \r)\Phi(t/2)^{1/2 - \s},
\end{align*}
where the function $G_{\s}(x, t)$ is defined by \eqref{def_G}.
\end{lemma}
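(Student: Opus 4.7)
The plan is to proceed in direct parallel with Lemma \ref{lem_zero1}. I first observe that since $\b \geq \s$, one has
\begin{align*}
\Re\l( \frac{-1}{s - \rho} \r) = \frac{\b - \s}{(\s - \b)^2 + (t - \g)^2} \geq 0,
\end{align*}
so no cancellation is lost on passing to absolute values. If $\s > \s_A$ the sum is empty and the right-hand side vanishes via $\tau(\s) = 0$, so I may assume $\s \leq \s_A$. I then split the sum as $S_1 + S_2$ according to whether $|t - \g| \leq \delta_x$ or $\delta_x < |t - \g| \leq 1$.

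For $S_1$ I will use the crude pointwise bound $\frac{\b - \s}{|s - \rho|^2} \leq \frac{1}{|s - \rho|} \leq \frac{1}{\delta_x} = \log x$, and bound the number of zeros with $\b \geq \s$ and $|t - \g| \leq \delta_x$ by covering $[t - \delta_x, t + \delta_x]$ with $O(\delta_x / l(t/2) + 1)$ SIZDC intervals of length $l$; this yields $O((\delta_x + l(t/2)) v(t/2) (\log t) \Phi(t/2)^{1/2 - \s})$ such zeros, so
\[
S_1 \ll (\log x)(\delta_x + l(t/2)) v(t/2) (\log t) \Phi(t/2)^{1/2 - \s} \ll G_{\s}(x, t) \Phi(t/2)^{1/2 - \s},
\]
which produces the ``$+1$'' part of the factor $\l(\frac{\log x}{\log \Phi(t/2)} + 1\r)$.

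For $S_2$ I mimic the treatment of $S_4$ in Lemma \ref{lem_zero1}. Set $H_1 = \log \Phi(t/2)$ and $\delta_1 = H_1^{-1}$, and partition the real parts into bands $\s + k\delta_1 \leq \b < \s + (k+1)\delta_1$ for $k = 0, \ldots, [(\s_A - \s)H_1]$, and the imaginary offsets into shells $m\delta_x < |t - \g| \leq (m+1)\delta_x$ for $m = 1, \ldots, [1/\delta_x]$. In each cell the summand is at most $(k+1)\delta_1 / (m^2 \delta_x^2)$, and the zero count in the cell is $O((\delta_x + l(t/2)) v(t/2) (\log t) \Phi(t/2)^{1/2 - \s - k\delta_1})$ by applying SIZDC at the cut-off $\s + k\delta_1$ to each length-$\delta_x$ piece of the shell (covered by $O(\delta_x/l(t/2) + 1)$ intervals of length $l$). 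The $m$-sum gives $\sum_m m^{-2} = O(1)$ and the $k$-sum collapses geometrically via $\sum_{k \geq 0} (k+1)\delta_1 \Phi(t/2)^{-k\delta_1} = \delta_1 \sum_k (k+1) e^{-k} = O(\delta_1)$, so
\[
S_2 \ll (\delta_x + l(t/2)) v(t/2) (\log t) \Phi(t/2)^{1/2 - \s} \cdot \frac{\delta_1}{\delta_x^2} = G_{\s}(x, t) \cdot \frac{\log x}{\log \Phi(t/2)} \Phi(t/2)^{1/2 - \s}.
\]
Adding the two estimates yields the claimed bound. The principal care point will be the bookkeeping in $S_2$: I need to check that the short-interval covering of each length-$\delta_x$ shell correctly reproduces the factor $(\delta_x + l(t/2))$ sitting inside $G_{\s}(x, t)$, and that the combined band/shell geometric decays collapse exactly to $\log x / \log \Phi(t/2)$ without a spurious logarithmic loss; the verification that the cut-off $\s + k\delta_1$ stays inside the admissible range $\geq \tfrac12 + 1/\Psi(T)$ for $T \in [t-1, t+1]$ follows from the monotonicity of $\Psi$ and the hypothesis $\s \geq \tfrac12 + 1/\Psi(t/2)$.
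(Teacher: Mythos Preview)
Your argument is essentially the paper's own proof: the same split into $|t-\gamma|\le\delta_x$ versus $\delta_x<|t-\gamma|\le 1$ (the paper calls these $S_6$ and $S_7$), the same crude bound $\le \log x$ per term on the first piece followed by a SIZDC zero-count, and the same double decomposition of the second piece into $\beta$-bands of width $(\log\Phi(t/2))^{-1}$ and $|t-\gamma|$-shells of width $\delta_x$, with the geometric sum $\sum_k (k+1)e^{-k}$ over bands collapsing to the factor $\log x/\log\Phi(t/2)$. One minor caveat: your assertion that the sum is empty when $\sigma>\sigma_A$ is not literally justified (a zero with $\beta\ge\sigma$ and $|t-\gamma|\le 1$ need not lie in the set $A$, since membership in $A$ requires $|t-\gamma|\le x^{3(\beta-1/2)}/\sqrt{\log x}$), but the paper's own proof does not address the $\tau(\sigma)$ factor inside $G_\sigma$ at all, so this is a shared looseness rather than a flaw peculiar to your write-up.
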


\begin{proof}
%It is clear that this lemma holds in the case of $\s > 1$, and so we consider the case of $\s \leq 1$.
Put
\begin{align*}
\underset{\beta \geq \s}{\underset{|s - \rho| > \delta_x}{\sum_{|t - \gamma| \leq 1}}}\Re\l(\frac{-1}{s - \rho} \r)
= \l( \underset{\beta \geq \s}{\underset{|s - \rho| > \delta_x}{\sum_{|t - \gamma| \leq \delta_x}}} 
+ \underset{\beta \geq \s}{\sum_{\delta_x < |t - \gamma| \leq 1}} \r)\Re\l( \frac{-1}{s - \rho} \r)
=: S_6 + S_7.
\end{align*}
As for $S_6$, by the assumption SIZDC, we can estimate
\begin{align*}
S_6 
\leq (\log{x})\underset{\beta \geq \s}{\sum_{|t - \gamma| \leq \delta_x}}1
\ll G_{\s}(x, t)\Phi(t/2)^{1/2 - \s}.
\end{align*}
Next we consider $S_7$. Put $H = \log{x}$, $H_1 = \log{\Phi(t/2)}$, and $\delta_1 = H_{1}^{-1}$. Then we have
\begin{align*}
S_7 
&\leq \underset{\beta \geq \s}{\sum_{\delta_x < |t - \gamma| \leq 1}}\frac{\b - \s}{(t - \gamma)^2}
\leq \sum_{k = 1}^{[H]}\underset{\b \geq \s}{\sum_{k \delta_x < |t - \gamma| \leq (k + 1)\delta_x}}\frac{\b - \s}{\delta_{x}^{2} k^2}\\
&\ll (\log{x})^2 \sum_{k = 1}^{[H]}\frac{1}{k^2}\sum_{m = 0}^{[(1 - \s)H_1]}
\underset{\s + m\delta_1 \leq \b < \s + (m + 1)\delta_1}{\sum_{k \delta_x < |t - \gamma| \leq (k + 1)\delta_x}}(\b - \s)\\
&\ll (\log{x})^2 \sum_{k = 1}^{[H]}\frac{1}{k^2}\sum_{m = 0}^{[(1 - \s)H_1]}
\underset{\s + m\delta_1 \leq \b < \s + (m + 1)\delta_1}{\sum_{k \delta_x < |t - \gamma| \leq (k + 1)\delta_x}}(m + 1)\delta_1\\
&\ll G_{\s}(x, t)\delta_1 (\log{x})\sum_{k = 1}^{[H]}\frac{1}{k^2}\sum_{m = 0}^{[(1 - \s)H_1]}(m + 1) \Phi(t/2)^{1 / 2 - (\s + m\delta_1)}\\
&\ll G_{\s}(x, t) \frac{\log{x}}{\log{\Phi(t/2)}} \Phi(t/2)^{1/2 - \s}\sum_{m = 0}^{[(1 - \s)H_1]}(m + 1)e^{-m}\\
&\ll G_{\s}(x, t) \frac{\log{x}}{\log{\Phi(t/2)}} \Phi(t/2)^{1/2 - \s}.
\end{align*}
Hence, we complete the proof of this lemma.
\end{proof}

Now we can obtain the following proposition by the above consequences.

\begin{proposition}	\label{fun_prop1}
Let $t \geq 14$, $3 \leq x \leq t^2$, and put $\delta_x = (\log{x})^{-1}$.
Assume the SIZDC-$(l, v, \Phi, \Psi)$ on $[t - L, t + L]$ with $L$ defined by \eqref{def_L}, and let $\frac{1}{\Psi(t / 2)} \leq a \leq 1$ and $s_1 = \s_1 + it$ with $\s_1 = \frac{1}{2} + a + \delta_x$.
For $\s \geq \s_1$, we have
\begin{align*}
&\frac{\zeta'}{\zeta}(s) - \sum_{|s - \rho| \leq \delta_x}\frac{1}{s - \rho} + \sum_{n \leq x^2}\frac{\Lam_{x}(n)}{n^s}\\
&\ll x^{1/2 + a - \s}\l| \sum_{n \leq x^2}\frac{\Lam_{x}(n)}{n^{s_1}} \r| + x^{1 / 2 + a - \s}\log{t} + G_{a}(x, t) \times\\ 
& \; \; \;  \times\l( x^{\frac{1}{2} - \s}F_{a}(x, t) + x^{\frac{1}{2} - \s}
\l(1 + \frac{\Phi(t/2)^{-\delta_x}\log{x}}{\log{\Phi(t/2)}}\r) \l(\frac{x}{\Phi(t/2)}\r)^a + \Phi(t/2)^{\frac{1}{2} - \s + \delta_x} \r),
\end{align*}
where the function $F_{a}(x, t)$ is defined by \eqref{def_F}.
In particular, we have
\begin{align}	\label{rep_s_1}
\frac{\zeta'}{\zeta}(s_1) - \sum_{|s_1 - \rho| \leq \delta_x}\frac{1}{s_1 - \rho} + \sum_{n \leq x^2}\frac{\Lam_{x}(n)}{n^{s_1}}
\ll E_{a}(x, t).
\end{align}
Here the function $E_{a}(x, t)$ is defined by \eqref{def_E}.
\end{proposition}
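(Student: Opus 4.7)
The plan is to apply Proposition \ref{uncon_rep}, move $-\sum_{n \leq x^2} \Lam_x(n)/n^s$ to the left-hand side, and then bound each of the four pieces on the right-hand side using Lemmas \ref{lem_near}, \ref{lem_zero1}, and \ref{lem_zero_real} for the three zero sums, together with the inequality $|\theta(s)| \leq 2x^{1/2 + a - \s}$ to control the multiplier of the $s_1$-quantity. Writing $M_1 := \frac{\zeta'}{\zeta}(s_1) - \sum_{|s_1 - \rho| \leq \delta_x}\frac{1}{s_1 - \rho}$, it is convenient to proceed in two stages: first establish \eqref{rep_s_1}, namely the bound $M_1 + \sum_{n \leq x^2} \Lam_x(n)/n^{s_1} \ll E_a(x, t)$, and then feed this back into the general-$s$ formula.

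For \eqref{rep_s_1}, specialize Proposition \ref{uncon_rep} to $s = s_1$. At this point $x^{1/2 + a - \s_1} = x^{-\delta_x} = e^{-1}$, so $|\theta(s_1)| \leq 2e^{-1} < 1$. Lemma \ref{lem_near} bounds the near-zero sum by $G_a(x,t) \Phi(t/2)^{-a}$ (since $\Phi(t/2)^{1/2 - \s_1 + \delta_x} = \Phi(t/2)^{-a}$), Lemma \ref{lem_zero1} bounds the far sum over zeros in $A$ by $G_a(x,t) e^{-1} x^{-a} F_a(x,t)$, and Lemma \ref{lem_zero_real} applied at $\s = \s_1$ controls the $O$-term. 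In the last application one uses $G_{\s_1}(x,t) \leq G_a(x,t)$, since $\tau(\s_1) = 1$ forces $\s_1 \leq \s_A$, hence $a < \s_1 \leq \s_A$ and $\tau(a) = 1$. Rearranging Proposition \ref{uncon_rep} at $s_1$ gives $(1 - \theta(s_1)) M_1 = -\sum_{n \leq x^2}\Lam_x(n)/n^{s_1} + O(\text{controlled terms})$; because $|1 - \theta(s_1)| \geq 1 - 2e^{-1} > 0$, this yields $M_1 \ll E_a(x,t)$, whence \eqref{rep_s_1}.

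With \eqref{rep_s_1} in hand, for arbitrary $\s \geq \s_1$ I return to Proposition \ref{uncon_rep}. The term $\theta(s) M_1$ is now bounded by $2x^{1/2 + a - \s} \cdot E_a(x,t)$, which accounts for the factors $x^{1/2 + a - \s}|\sum_{n \leq x^2} \Lam_x(n)/n^{s_1}|$ and $x^{1/2 + a - \s}\log t$, with the remaining contributions naturally absorbed into the $G_a(x,t)(\ldots)$ block. The three explicit pieces contribute, via the same three lemmas applied at general $\s$, the quantities $G_a(x,t)\Phi(t/2)^{1/2 - \s + \delta_x}$, $G_a(x,t) x^{1/2 - \s} F_a(x,t)$, and (after multiplying the real-part bound by $x^{1/2 + a - \s}$ and using $\Phi(t/2)^{1/2 - \s_1} = \Phi(t/2)^{-a - \delta_x}$) the mixed term $G_a(x,t) x^{1/2 - \s}\l(1 + \fr{\Phi(t/2)^{-\delta_x}\log x}{\log \Phi(t/2)}\r)\l(\fr{x}{\Phi(t/2)}\r)^a$. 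The only real obstacle is careful bookkeeping: verifying that the various powers of $x$ and $\Phi(t/2)$ recombine to the stated shape, and that the potential gap between $G_{\s_1}$ and $G_a$ introduced by the $\tau$-indicator is always benign — both of which follow directly from $\s_1 > a$ and the monotonicity built into the definitions.
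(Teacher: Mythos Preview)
Your proposal is correct and follows essentially the same two-stage route as the paper: first specialize Proposition~\ref{uncon_rep} to $s=s_1$, exploit $|\theta(s_1)|\leq 2e^{-1}<1$ together with Lemmas~\ref{lem_near}, \ref{lem_zero1}, \ref{lem_zero_real} to solve for $M_1$ and obtain \eqref{rep_s_1}, then feed this bound back into Proposition~\ref{uncon_rep} at general $s$ and reapply the three lemmas. Your more detailed bookkeeping (the $G_{\s_1}\leq G_a$ check via $\tau$, and the recombination $x^{1/2+a-\s}\Phi(t/2)^{-a-\delta_x}=x^{1/2-\s}(x/\Phi(t/2))^a\Phi(t/2)^{-\delta_x}\leq x^{1/2-\s}(x/\Phi(t/2))^a$) is accurate and makes explicit what the paper's terse proof leaves to the reader.
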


\begin{proof}
By Proposition \ref{uncon_rep}, Lemmas \ref{lem_near}, \ref{lem_zero1}, and \ref{lem_zero_real}, we can find that
\begin{align*}
\frac{\zeta'}{\zeta}(s_1) - \sum_{|s_1 - \rho| \leq \delta_x}\frac{1}{s_1 - \rho}
\ll E_{a}(x, t).
%\l| \sum_{n \leq \Psi(t)^2}\frac{\Lam_{\Psi}(n)}{n^{s}} \r| + \frac{\Psi(t)^{2 - 2\s} + \Psi(t)^{1 - \s}}{t^2 \log{\Psi(t)}}\log{t}
%+\frac{\Psi(t)^{1 / 2 - \s}\log{t}}{(\s - 1/2)\log{\Psi(t)}}\\
%&+ (\e + \delta) v(t) (\log{\Psi(t)}) (\log{t})
%\l( \Phi(t)^{1 - 2\s} + \Psi(t)^{1 / 2 - \s}F(t) + \frac{\l(\Psi(t)\Phi(t)^2\r)^{1/2 - \s}}{(\s - 1/2)(\log{\Phi(t)})} \r).
\end{align*}
Moreover, applying this estimate to the right hand side of Proposition \ref{uncon_rep} and using lemmas again, 
we obtain this proposition.
%The later statement is also immediately consequence by the former statement and the estimate $\sum_{n \leq x}\Lam(n) \ll x$.
\end{proof}

\begin{lemma}	\label{lem_near_critical}
Let $t \geq 14$, $3 \leq x \leq t^2$, and put $\delta_x = (\log{x})^{-1}$.
Assume the SIZDC-$(l, v, \Phi, \Psi)$ on $[t - L, t + L]$ with $L$ defined by \eqref{def_L}, 
and let $\frac{1}{\Psi(t / 2)} \leq a \leq 1$.
Then, we have
\begin{align*}
N\l(t + a + \delta_x \r) - N(t)
\ll (a + \delta_x) E_{a}(x, t),
\end{align*}
where $E_{a}(x, t)$ is defined by \eqref{def_E}.
\end{lemma}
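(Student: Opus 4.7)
The plan is to bound $N(t + a + \delta_x) - N(t)$ by relating it to the real part of the logarithmic derivative at an auxiliary point. Let $h := a + \delta_x$, $t^\ast := t + h/2$, and take $s_\ast := \sigma_\ast + i t^\ast$ with $\sigma_\ast$ chosen slightly larger than $1/2 + h$ (concretely $\sigma_\ast = 1/2 + 2h$; the case of $a$ close to $1$ is handled separately by the classical Riemann--von Mangoldt bound). Every zero $\rho = \beta + i\gamma$ with $\gamma \in [t, t+h]$ then satisfies $|t^\ast - \gamma| \le h/2$.

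First I would apply \eqref{rep_s_1} from Proposition~\ref{fun_prop1} at $s_\ast$ with parameter $a' = 2a + \delta_x$. Since $|t^\ast - t| \le h \le 1$, the functions $E_{a'}, F_{a'}, G_{a'}$ at $t^\ast$ are comparable to those at $t$, so the inflation of $a$ is harmless. Combining \eqref{rep_s_1} with the partial-fraction identity \eqref{PEFZ} for $(\zeta'/\zeta)(s_\ast)$, and absorbing $\sum_{n} \Lambda_{x}(n) n^{-s_\ast}$ together with $\log t$ into $E_{a}(x,t)$, taking real parts would yield
\begin{equation*}
\sum_{|t^\ast - \gamma| \le 1} \Re \frac{1}{s_\ast - \rho} \;\ll\; E_{a}(x, t) + \biggl|\Re \sum_{|s_\ast - \rho| \le \delta_x} \frac{1}{s_\ast - \rho}\biggr|.
\end{equation*}
The last sum is controlled exactly as in Lemma~\ref{lem_near}: each term is $O(\log x)$, and the SIZDC bounds the number of terms, yielding $O(E_{a}(x, t))$ in total.

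Next I would convert this sum bound into a count bound. For every zero $\rho$ with $\gamma \in [t, t+h]$ and $\beta \le 1/2 + h$ one has $\sigma_\ast - \beta \ge h \ge |t^\ast - \gamma|$, hence $\Re(s_\ast - \rho)^{-1} \ge 1/(4h)$. Summing this pointwise lower bound, the count of such ``tame'' zeros is at most $4h$ times the left-hand side of the displayed inequality, giving $\ll h\, E_{a}(x, t)$. The remaining ``wild'' zeros (those with $\beta > 1/2 + h$) are controlled directly by the SIZDC at the threshold $\sigma = 1/2 + h$, which is legitimate since $h \ge a \ge 1/\Psi(t/2)$: their number on the interval $[t, t+h]$, enlarged to a window of length $\asymp l(t/2)$ if necessary at the cost of a constant factor, is $\ll h\, v(t/2)(\log t)\Phi(t/2)^{-h}$, which is also absorbed into $h\, E_{a}(x, t)$.

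The main obstacle is handling zeros with $\beta$ close to $\sigma_\ast$: there the pointwise lower bound on $\Re(s_\ast - \rho)^{-1}$ fails and the real part can even be negative, so these exceptional zeros cannot be counted via the logarithmic derivative alone. This is precisely the role of the SIZDC, which bounds the exceptional count directly. A secondary technicality is matching the split threshold $\sigma = 1/2 + h$ with the SIZDC hypothesis $\sigma \ge 1/2 + 1/\Psi(t/2)$; the hypothesis $a \ge 1/\Psi(t/2)$ of the lemma is exactly what makes this comparison valid.
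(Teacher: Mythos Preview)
Your overall plan matches the paper's: bound the zero count through $\Re(\zeta'/\zeta)$ at a point slightly to the right of $1/2$, via Proposition~\ref{fun_prop1} and \eqref{PEFZ}. However, two steps in your execution break down.

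First, the claimed pointwise lower bound $\Re(s_\ast-\rho)^{-1}\ge 1/(4h)$ is false for zeros with $\beta$ well below $1/2$: if $\beta$ is close to $0$ then $\sigma_\ast-\beta\approx 1/2$, so $\Re(s_\ast-\rho)^{-1}\asymp 1$, not $\asymp 1/h$. You split off only the right tail $\beta>1/2+h$ by SIZDC, but forget the mirror tail $\beta<1/2-h$. The paper restricts its positivity argument to the symmetric strip $|\beta-\tfrac12|<a+\delta_x$ (the sum $S_9$ in its proof), where $\sigma_1-\beta\in(0,2(a+\delta_x))$ gives the needed $\gg 1/(a+\delta_x)$ lower bound, and disposes of \emph{both} tails $\beta\ge\sigma_1$ and $\beta\le 1-\sigma_1$ separately via Lemma~\ref{lem_zero_real} and SIZDC (using the functional--equation symmetry $\rho\leftrightarrow 1-\bar\rho$).

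Second, and more seriously, the assertion that $E_{a'}$ at $t^\ast$ is comparable to $E_a$ at $t$ fails for the Dirichlet--polynomial term. By \eqref{def_E}, $E_a(x,t)$ contains $\bigl|\sum_{n\le x^2}\Lambda_x(n)n^{-s_1}\bigr|$ with $s_1=\tfrac12+a+\delta_x+it$, whereas \eqref{rep_s_1} applied with parameter $a'$ at height $t^\ast$ produces the same sum at $s_1'=\tfrac12+2a+2\delta_x+it^\ast$. There is no way to bound one specific value of this polynomial by the other; monotonicity holds for $\sum\Lambda_x(n)n^{-\sigma}$, not for $\bigl|\sum\Lambda_x(n)n^{-s}\bigr|$. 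The paper avoids this entirely by working at $s_1$ itself: combining \eqref{PEFZ} with \eqref{rep_s_1} at $s=s_1$ gives directly
\[
\sum_{\substack{|t-\gamma|\le 1\\ |s_1-\rho|>\delta_x}}\Re\frac{1}{s_1-\rho}\ll E_a(x,t),
\]
with the Dirichlet polynomial landing exactly where the definition of $E_a$ places it. No shift in $t$ or inflation of $a$ is needed, and the lemma then follows from the tail removals above plus the lower bound on $S_9$.
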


\begin{proof}
%We use the well known representation:
%\begin{align}	\label{PEFZ}
%\frac{\zeta'}{\zeta}(\s + it) = \sum_{|t - \gamma| \leq 1}\frac{1}{\s + it - \rho} + O(\log{t})
%\end{align}
%for $t \geq 3$.
Let $s_{1} = \s_{1} + it$ with $\s_{1} = \frac{1}{2} + a + \delta_x$.
By equation \eqref{PEFZ} and inequality \eqref{rep_s_1}, we have
\begin{align}	\label{Res_1}
\underset{|s_1 - \rho| > \delta_x}{\sum_{|t - \gamma| \leq 1}}\Re\l(\frac{1}{s_1 - \rho}\r)
\ll E_{a}(x, t).
\end{align}
In the following, we consider the sum on the left hand side of this inequality.

First, we divide the sum as
\begin{align*}
%&\underset{|s_1 - \rho| > \delta_x}{\sum_{|t - \gamma| \leq 1}}\Re\l(\frac{1}{s_1 - \rho}\r)\\
&\l( \underset{\b \geq \s_1}{\underset{|s_1 - \rho| > \delta_x}{\sum_{|t - \gamma| \leq 1}}}
+ \underset{|\b - \frac{1}{2}| < a + \delta_x}{\underset{|s_1 - \rho| > \delta_x}{\sum_{|t - \gamma| \leq 1}}}
+ \underset{0 < \b \leq 1 - \s_1}{\sum_{|t - \gamma| \leq \delta_x}}
+\underset{0 < \b \leq 1 - \s_1}{\sum_{\delta_x < |t - \gamma| \leq 1}} \r)\Re\l(\frac{1}{s_1 - \rho}\r)\\
&=: S_{8} + S_{9} + S_{10} + S_{11},
\end{align*}
say.
By Lemma \ref{lem_zero_real}, we obtain $S_{8} \ll G_{a}(x, t)\l(1 + \frac{\log{x}}{\log{\Phi(t / 2)}}\r)\Phi(t/2)^{-a - \delta_x}$.
We also obtain $S_{10} \ll G_{a}(x, t) \Phi(t / 2)^{-a - \delta_x}$ by the assumption SIZDC.

Next we consider $S_{11}$. It is clear that $S_{11} = 0$ when $a > \s_A - \frac{1}{2}$. 
Hence, we consider the case of $a \leq \s_A - \frac{1}{2}$.
Put $H = \log{x}$, $H_1 = \log{\Phi(t / 2)}$, and $\delta_1 = {H_1}^{-1}$.
Then we have
\begin{align*}
|S_{11}|
&= \underset{0 < \b \leq 1 - \s_1}{\sum_{\delta_x < |t - \gamma| \leq 1}}\frac{\s_1 - \b}{(\s_1 - \b)^2 + (t - \gamma)^2}\\
&\leq \sum_{m = 0}^{[(1 - \s_1)H_1]}\sum_{k = 1}^{[H]}\underset{1 - \s_1 - (m + 1)\delta_1 < \b \leq 1 - \s_1 - m \delta_1}
{\sum_{k\delta_x < |t - \gamma| \leq (k + 1)\delta_x}}\frac{\s_1 - \b}{(\s_1 - \b)^2 + (t - \gamma)^2}\\
&\leq \sum_{m = 0}^{[(1 - \s_1)H_1]} \sum_{k = 1}^{[H]}\frac{2a + 2\delta_x + (m + 1)\delta_1}{(2a + 2\delta_x + m\delta_1) + k^2\delta_x^2}
\underset{1 - \s_1 - (m + 1)\delta_1 < \b \leq 1 - \s_1 - m\delta_1}{\sum_{k\delta_x < |t - \gamma| \leq (k + 1)\delta_x}}1\\
&\ll G_{a}(x, t) \Phi(t/2)^{-a - \delta_x} \sum_{m = 0}^{[H_1]}\frac{2a + 2\delta_x + (m + 1)\delta_1}{2a + 2\delta_x + m\delta_1}e^{-m}\\
&\ll G_{a}(x, t) \l(1 + \frac{\log{x}}{\log{\Phi(t / 2)}}\r)\Phi(t / 2)^{-a - \delta_x}.
\end{align*}
By these estimates and \eqref{Res_1}, we obtain
\begin{align*}
S_{9} 
\ll E_{a}(x, t).
\end{align*}
%and
%\begin{align*}
%S_{9} 
%= \us{|\b - \frac{1}{2}| < a + \delta_x}{\us{|s_1 - \rho| > \delta_x}{\sum_{|t - \gamma| \leq 1}}}\Re\l(\frac{1}{s_1 - \rho} \r)
%= \us{|\b - \frac{1}{2}| < a + \delta_x}{\sum_{|t - \gamma| \leq 1}}\Re\l( \frac{1}{s_1 - \rho} \r)
%\end{align*}
Moreover, $S_{9}$ can be estimated by
\begin{align*}
S_{9}
\geq \underset{|\b - \frac{1}{2}| \leq a}{\sum_{|t - \gamma| \leq 1}}\frac{\s_1 - \b}{(\s_1 - \b)^2 + (t - \gamma)^2}
%\geq \underset{|\b - \frac{1}{2}| \leq a}{\sum_{|t - \gamma| \leq 1}}\frac{2a + \delta_x}{(2a + \delta_x)^{2} + \delta_{x}^{2}}
\gg \frac{1}{a + \delta_x} \underset{|\b - \frac{1}{2}| \leq a}{\sum_{|t - \gamma| \leq a+ \delta_x}}1.
\end{align*}
Hence, we have
\begin{align*}
\underset{|\b - \frac{1}{2}| \leq a}{\sum_{|t - \gamma| \leq a+ \delta_x}}1
\ll (a + \delta_x) E_{a}(x, t).
\end{align*}
By combining this inequality and the assumption SIZDC-$(l, v, \Phi, \Psi)$, we obtain this lemma.
\end{proof}

\begin{lemma}	\label{rep_near_c_Z}
Let $t \geq 14$, $3 \leq x \leq t^2$, and put $\delta_x = (\log{x})^{-1}$.
Assume the SIZDC-$(l, v, \Phi, \Psi)$ on $[t - L, t + L]$ with $L$ defined by \eqref{def_L}, 
and let $\frac{1}{\Psi(t / 2)} \leq a \leq 1$ and $s_{1} = \s_{1} + it$ 
with $\s_{1} = \frac{1}{2} + a + \delta_x$.
For $\frac{1}{2} \leq \s \leq \s_1$, we have
\begin{align*}
\frac{\zeta'}{\zeta}(s) 
= \sum_{|t - \gamma| \leq \delta_x}\frac{1}{s - \rho} - \sum_{n \leq x^2}\frac{\Lam_{x}(n)}{n^{s_1}} + O\l(\l( 1 + \frac{a}{\delta_x} \r)^{2} E_{a}(x, t)\r),
\end{align*}
where $E_{x}(t)$ is defined by \eqref{def_E}.
\end{lemma}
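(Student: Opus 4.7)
The plan is to bridge from the point $s_1 = \s_1 + it$ (where equation \eqref{rep_s_1} already identifies $\frac{\zeta'}{\zeta}(s_1)$ with the truncated local zero-sum $\sum_{|s_1 - \rho| \leq \delta_x}(s_1 - \rho)^{-1}$ minus the Dirichlet polynomial $\sum_{n \leq x^2}\Lam_{x}(n)/n^{s_1}$, up to an error of size $O(E_a(x, t))$) to the target point $s = \s + it$ lying to the left on the same horizontal line. This shift is performed by the unconditional partial fraction expansion \eqref{PEFZ} applied separately at $s$ and at $s_1$: subtracting the two identities gives
\begin{align*}
\frac{\zeta'}{\zeta}(s) = \frac{\zeta'}{\zeta}(s_1) + \sum_{|t - \gamma| \leq 1}\l(\frac{1}{s - \rho} - \frac{1}{s_1 - \rho}\r) + O(\log{t}),
\end{align*}
so that after substituting \eqref{rep_s_1} every remaining object becomes a sum over zeros in the horizontal strip $|t - \gamma| \leq 1$, together with the desired Dirichlet polynomial at $s_1$ and an error of size $O(E_a(x,t) + \log t)$.

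I would then regroup the four zero-sums in play, splitting the range $|t - \gamma| \leq 1$ into the near range $|t - \gamma| \leq \delta_x$ and the far range $\delta_x < |t - \gamma| \leq 1$, and exploit the inclusion $\{|s_1 - \rho| \leq \delta_x\} \subseteq \{|t - \gamma| \leq \delta_x\}$ (forced by $|s_1 - \rho|^2 = (\s_1 - \b)^2 + (t - \gamma)^2$). The near-range $s$-sum collapses into the advertised main term $\sum_{|t - \gamma| \leq \delta_x}(s - \rho)^{-1}$, while the three remaining pieces telescope into a difference sum over the far range plus a residual sum over near zeros with $|s_1 - \rho| > \delta_x$. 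It then suffices to prove
\begin{align*}
(\s_1 - \s)\sum_{\delta_x < |t - \gamma| \leq 1}\frac{1}{(s - \rho)(s_1 - \rho)} + \us{|s_1 - \rho| > \delta_x}{\sum_{|t - \gamma| \leq \delta_x}}\frac{1}{s_1 - \rho} \ll \l(1 + \frac{a}{\delta_x}\r)^{2} E_a(x, t).
\end{align*}

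Both estimates are routine. For the far-range piece I would use $|(s - \rho)(s_1 - \rho)| \geq (t - \gamma)^{2}$ (both factors have imaginary part $t - \gamma$) combined with a dyadic decomposition of the range of $|t - \gamma|$ and the classical Riemann--von Mangoldt bound on zero counts in short intervals, to obtain $\sum_{\delta_x < |t - \gamma| \leq 1}(t - \gamma)^{-2} \ll \delta_x^{-1}\log{t}$; multiplying by $\s_1 - \s \leq a + \delta_x$ yields $(1 + a/\delta_x)\log{t}$. For the near-range residual, each term is at most $\delta_x^{-1}$ by hypothesis, and Lemma \ref{lem_near_critical} bounds the count of zeros with $|t - \gamma| \leq \delta_x$ by $O((a + \delta_x)E_a(x, t))$, giving $(1 + a/\delta_x)E_a(x, t)$. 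Since $E_a(x,t) \geq \log{t}$ by construction, both contributions are comfortably absorbed by $(1 + a/\delta_x)^{2}E_a(x, t)$.

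The main obstacle is the combinatorial bookkeeping in the regrouping step: one must correctly identify the difference $\sum_{|s_1 - \rho| \leq \delta_x}(s_1 - \rho)^{-1} - \sum_{|t - \gamma| \leq \delta_x}(s_1 - \rho)^{-1}$ produced by the PEFZ computation with the single residual sum $-\sum_{|s_1 - \rho| > \delta_x,\, |t - \gamma| \leq \delta_x}(s_1 - \rho)^{-1}$, so that no spurious contribution survives on the side of the target formula. Once the regrouping is carried out, no new analytic input is required beyond Proposition \ref{fun_prop1}, Lemma \ref{lem_near_critical}, and the Riemann--von Mangoldt density bound.
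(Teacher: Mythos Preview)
Your overall strategy coincides with the paper's: both arguments inject \eqref{rep_s_1} into the partial-fraction identity \eqref{PEFZ}, split the zero sum at $|t-\gamma|=\delta_x$, handle the near-range residual $\sum_{|s_1-\rho|>\delta_x,\ |t-\gamma|\le\delta_x}(s_1-\rho)^{-1}$ via Lemma~\ref{lem_near_critical}, and are left with the far-range difference $\sum_{\delta_x<|t-\gamma|\le 1}\bigl((s-\rho)^{-1}-(s_1-\rho)^{-1}\bigr)$.

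The gap is in your treatment of this last sum. The classical Riemann--von Mangoldt bound only tells you that the total number of zeros with $|t-\gamma|\le 1$ is $O(\log t)$; it gives no improvement on subintervals. Hence a dyadic decomposition yields
\[
\sum_{\delta_x<|t-\gamma|\le 1}(t-\gamma)^{-2}\ \ll\ \delta_x^{-2}\log t,
\]
not the $\delta_x^{-1}\log t$ you assert (all $\asymp\log t$ zeros could sit at distance $\asymp\delta_x$). After multiplying by $\sigma_1-\sigma\le a+\delta_x$ you obtain $(1+a/\delta_x)(\log x)(\log t)$, and this is \emph{not} dominated by $(1+a/\delta_x)^2E_a(x,t)$ in general: when $a$ is small (for instance $a=\delta_x$, the case used in Theorem~\ref{main_thm}) the right side is $\asymp E_a(x,t)$, while the left side carries an extra factor $\log x$ that $E_a(x,t)$ need not contain (e.g.\ under strong zero-density input one may have $G_a\equiv 0$ and the Dirichlet polynomial small, so $E_a(x,t)\asymp\log t$).

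The paper closes this gap by invoking Lemma~\ref{lem_near_critical} a second time, in the far range as well: bounding the count of zeros in each interval $k\delta_x<|t-\gamma|\le(k+1)\delta_x$ by $(a+\delta_x)E_a(x,t)$ rather than by $\log t$ converts the sum $\sum_k k^{-2}\cdot\#\{\cdots\}$ into $O\bigl((a+\delta_x)E_a(x,t)\bigr)$, which after the prefactor $(a+\delta_x)\delta_x^{-2}$ gives exactly $(1+a/\delta_x)^2E_a(x,t)$. In short, the conditional short-interval zero count is needed twice, not once; replace your appeal to Riemann--von Mangoldt by Lemma~\ref{lem_near_critical} and the argument goes through.
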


\begin{proof}
We divide the sum for non-trivial zeros of equation \eqref{PEFZ} as
\begin{align*}
\frac{\zeta'}{\zeta}(\s + it) 
&= \l(\sum_{|t - \gamma| \leq \delta_x} + {\sum_{\delta_x < |t - \gamma| \leq 1}} \r)\frac{1}{s - \rho} + O(\log{t})\\
&=: Q_1 + Q_2 + O(\log{t}).
\end{align*}
Therefore, to complete the proof it suffices to show
\begin{align}
Q_2 = -\sum_{n \leq x^2}\frac{\Lam_{x}(n)}{n^{s_x}} + O\l(\l( 1 + \frac{a}{\delta_x} \r)^{2} E_{a}(x, t)\r).
\end{align}
As a preparation, we first show 
\begin{align}	\label{lem_for_rep_lem}
\sum_{\delta_{x} < |t - \gamma| \leq 1}\frac{1}{s_1 - \rho} 
= -\sum_{n \leq x^2}\frac{\Lam_{x}(n)}{n^{s_1}} + O\l(\l( 1 + \frac{a}{\delta_x} \r) 
E_{a}(x, t)\r).
\end{align}
By \eqref{PEFZ} and \eqref{rep_s_1}, we have
\begin{align*}
\us{|s_1 - \rho| > \delta_x}{\sum_{|t - \gamma| \leq 1}}\frac{1}{s_1 - \rho}
= -\sum_{n \leq x^2}\frac{\Lam_{x}(n)}{n^{s_1}} + O(E_{a}(x, t)),
\end{align*}
and by Lemma \ref{lem_near_critical}, we have
\begin{align*}
\l|\us{|s_1 - \rho| > \delta_x}{\sum_{|t - \gamma| \leq \delta_x}}\frac{1}{s_1 - \rho}\r|
\leq \delta_x^{-1}\sum_{|t - \gamma| \leq \delta_x}1 
\ll \l(1 + \frac{a}{\delta_x}\r) E_{a}(x, t).
\end{align*}
Hence, we obtain \eqref{lem_for_rep_lem}.

By using \eqref{lem_for_rep_lem}, we have
\begin{align*}
Q_2 
= -\sum_{n \leq x^2}\frac{\Lam_{x}(n)}{n^{s_1}}
+ \sum_{\delta_x < |t - \gamma| \leq 1}\l( \frac{1}{s - \rho} - \frac{1}{s_1 - \rho} \r) + O\l( \l( 1 + \frac{a}{\delta_x} \r) E_{a}(x, t) \r).
\end{align*}
Put $H = \log{x}$. Now, applying Lemma \ref{lem_near_critical} to the second term on the right hand side 
of the above, we find that
\begin{align*}
&\sum_{\delta_x < |t - \gamma| \leq 1}\l( \frac{1}{s - \rho} - \frac{1}{s_1 - \rho} \r)
\ll (\s_1 - \s)\sum_{\delta_x < |t - \gamma| \leq 1}\frac{1}{|t - \gamma|^2}\\
&\ll (a + \delta_x) \sum_{k = 1}^{[H]}\sum_{k \delta_x < |t - \gamma| \leq (k + 1)\delta_x}\frac{1}{k^2 \delta_{x}^{2}}
\ll \l(\frac{a + \delta_x}{\delta_{x}^2}\r)\sum_{k = 1}^{[H]}\frac{1}{k^2}\sum_{k \delta_x < |t - \gamma| \leq (k + 1) \delta_x}1\\
&\ll \l( 1 + \frac{a}{\delta_x} \r)^{2} E_{a}(x, t).
\end{align*}
Hence, we obtain this lemma.
\end{proof}

\section{\textbf{Proof of Theorem \ref{Gen_thm}}}

\begin{proof}[Proof of Theorem \ref{Gen_thm}]
Let $\frac{1}{2} \leq \s \leq 2$ and 
$t \geq 14$ be not the ordinate of zeros of the Riemann zeta-function, and $3 \leq x \leq t^2$.
First we show the theorem in the case of $\frac{1}{2} + a + \delta_x =: \s_1 \leq \s \leq 2$.
We have
\begin{align}	\label{rep_log_Z}
\log{\zeta(\s + it)}
= \int_{2}^{\s} \frac{\zeta'}{\zeta}(\a + it) d\a + O(1).
\end{align}
Now, by using Proposition \ref{fun_prop1}, we have
\begin{align*}
\log{\zeta(\s + it)}
= &\int_{2}^{\s} \l(\sum_{|(\a - \b) - i(t - \gamma)| \leq \delta_x}\frac{1}{(\a - \b) + i(t - \gamma)}\r)d\a
+\sum_{2 \leq n \leq x^2}\frac{\Lam_{x}(n)}{n^{\s + it} \log{n}}\\
&+ O\l( Y_{a}(\s, x, t) \r),
\end{align*}
where $Y_{a}(\s, x, t)$ is defined by \eqref{def_Y}.
Here, by the assumption SIZDC-$(l, v, \Phi, \Psi)$, we have
\begin{align*}
&\sum_{|(\a - \b) - i(t - \gamma)| \leq \delta_x}\frac{1}{(\a - \b) + i(t - \gamma)}\\
&= \us{|\a - \b| \leq \delta_x}{\sum_{|t - \gamma| \leq \delta_x}}\frac{1}{(\a - \b) + i(t - \gamma)}
+ O\l( G_{a}(x, t) \Phi(t / 2)^{1/2 - \a} \r)
\end{align*}
for $\s_1 \leq \a \leq 2$. Therefore, we find that
\begin{align*}
&\int_{2}^{\s} \l(\sum_{|(\a - \b) - i(t - \gamma)| \leq \delta_x}\frac{1}{(\a - \b) + i(t - \gamma)}\r)d\a\\
&= \int_{2}^{\s}\us{|\a - \b| \leq \delta_x}{\sum_{|t - \gamma| \leq \delta_x}}\frac{d\a}{(\a - \b) + i(t - \gamma)} + O\l( Y_{a}(\s, x, t) \r)\\
&= -\us{\s - \delta_x \leq \b < 1}{\sum_{|t - \gamma| \leq \delta_x}}\int_{\max\l\{ \b - \delta_x, \s \r\}}^{\b + \delta_x}\frac{d\a}{(\a - \b) + i(t - \gamma)} 
+ O\l( Y_{a}(\s, x, t) \r)\\
&= -\us{\s - \delta_x \leq \b \leq \s + \delta_x}{\sum_{|t - \gamma| \leq \delta_x}}\int_{\s}^{\b + \delta_x}\frac{d\a}{(\a - \b) + i(t - \gamma)}
-\us{\s + \delta_x < \b < 1}{\sum_{|t - \gamma| \leq \delta_x}}\int_{\b - \delta_x}^{\b + \delta_x}\frac{d\a}{(\a - \b) + i(t - \gamma)}\\
& \qquad + O\l( Y_{a}(\s, x, t) \r).
\end{align*}
By the assumption SIZDC-$(l, v, \Phi, \Psi)$, we can obtain 
\begin{align*}
&\us{\s - \delta_x \leq \b \leq \s + \delta_x}{\sum_{|t - \gamma| \leq \delta_x}}\int_{\s}^{\b + \delta_x}\frac{d\a}{(\a - \b) + i(t - \gamma)}\\
&= -\us{\s - \delta_x \leq \b \leq \s + \delta_x}{\sum_{|t - \gamma| \leq \delta_x}}
\log\l| \frac{s - \rho}{\delta_x + i(t - \gamma)} \r| + O\l( Y_{a}(\s, x, t) \r)\\
&= -\sum_{|s - \rho| \leq \delta_x}
\log\l| \frac{s - \rho}{\delta_x + i(t - \gamma)} \r| + O\l( Y_{a}(\s, x, t) \r)
%&= -\frac{1}{2} \us{|\s - \b| \leq \delta_x}{\sum_{|t - \gamma| \leq \delta_x}}
%\log\l( \frac{(\s - \b)^2 + (t - \gamma)^2}{\delta_x^2 + (t - \gamma)^2} \r) + O\l( Y_{a}(\s, x, t) \r),
\end{align*}
and 
\begin{align*}
\us{\s + \delta_x < \b < 1}{\sum_{|t - \gamma| \leq \delta_x}}\int_{\b - \delta_x}^{\b + \delta_x}\frac{d\a}{(\a - \b) + i(t - \gamma)}
\ll Y_{a}(\s, x, t).  %\tau(a) (l(t / 2) + \delta_x) v(t / 2) \frac{(\log{x})(\log{t})}{\log{\Phi(t / 2)}}\Phi(t / 2)^{1/2 - \s}.
\end{align*}
Hence, for $\s_1 \leq \s \leq 2$, we have
\begin{align}	\label{logz_1}
\log{\zeta(s)}
=\sum_{|s - \rho| \leq \delta_x}
\log\l| \frac{s - \rho}{\delta_x + i(t - \gamma)} \r|
+ \sum_{2 \leq n \leq x^{2}}\frac{\Lam_{x}(n)}{n^{s} \log{n}}
+ O\l( Y_{a}(\s, x, t) \r).
\end{align}

Next, we consider the case $\frac{1}{2} \leq \s \leq \s_1$.
Now by \eqref{logz_1}, we have
\begin{align*}
\log{\zeta(s)}
=& \int_{\s_1}^{\s}\frac{\zeta'}{\zeta}(\a + it)d\a + \log{\zeta(s_1)}\\
=& \int_{\s_1}^{\s}\frac{\zeta'}{\zeta}(\a + it)d\a
+\sum_{|s_1 - \rho| \leq \delta_x}
\log\l| \frac{s_1 - \rho}{\delta_x + i(t - \gamma)} \r|\\
&+ \sum_{2 \leq n \leq x^{2}}\frac{\Lam_{x}(n)}{n^{s_1} \log{n}}
+ O\l( Y_{a}(\s_1, x, t) \r),
\end{align*}
and by Lemma \ref{lem_near_critical} and Lemma \ref{rep_near_c_Z}, we have
\begin{align*}
\int_{\s_1}^{\s}\frac{\zeta'}{\zeta}(\a + it)d\a
= &\int_{\s_1}^{\s}\sum_{|t - \gamma| \leq \delta_x}\frac{d\a}{(\a - \b) + i(t - \gamma)}\\
&+ O\l((a + \delta_x) \l|\sum_{n \leq x^2}\frac{\Lam_{x}(n)}{n^{s_1}}\r| 
+ (\s_1 - \s)\l( 1 + \frac{a}{\delta_x} \r)^{2}E_{a}(x, t)\r)\\
= & \sum_{|t - \gamma| \leq \delta_x}
\log\l| \frac{s - \rho}{s_1 - \rho} \r|
+ O\l((\s_1 - \s)\l( 1 + \frac{a}{\delta_x} \r)^{2}E_{a}(x, t)\r).
\end{align*}
Hence, again applying Lemma \ref{lem_near_critical}, we can find that
\begin{align*}
\log{\zeta(\s + it)} 
=&\sum_{|t - \gamma| \leq \delta_x}
\log\l| \frac{s - \rho}{s_1 - \rho} \r|
+\sum_{|s_1 - \rho| \leq \delta_x}\log\l| \frac{s_1 - \rho}{\delta_x + i(t - \gamma)} \r|\\
&+ \sum_{2 \leq n \leq x^2}\frac{\Lam_{x}(n)}{n^{s_1} \log{n}}
+ O\l((\s_1 - \s)\l( 1 + \frac{a}{\delta_x} \r)^{2}E_{a}(x, t) + Y_{a}(x, t) \r).
\end{align*}
From the above calculations, we obtain Theorem \ref{Gen_thm}.
\end{proof}

\begin{acknowledgment*}
The author expresses his gratitude to Professors Kohji Matsumoto, Yoonbok Lee, and Masatoshi Suzuki for their helpful comments.
\end{acknowledgment*}

%%%%%%%%%%%%%%%%%%%%%%%%%%%%%%%%%%%%%%%%%%%%%%%%%%%%%%%%%%%%%%%%%%%%%%%%%%%%%%%%%%%%%%%%%%%%%%%%%%%%%%%%%%%%%%%%%%%%%%%%%%%%%%%%%%%%%%%%%%%%

%                                                  \section{References}                              

%%%%%%%%%%%%%%%%%%%%%%%%%%%%%%%%%%%%%%%%%%%%%%%%%%%%%%%%%%%%%%%%%%%%%%%%%%%%%%%%%%%%%%%%%%%%%%%%%%%%%%%%%%%%%%%%%%%%%%%%%%%%%%%%%%%%%%%%%%%%%

%\vspace{3em}

\end{document}